\newcommand{\IS}[2]{{\sc ice slide$_{#1}^{#2}$}}
\newcommand{\opt}[3]{\iota\sigma_{#1}^{#2}(#3)}
\newcommand{\opts}[2]{\iota\sigma_{#1}^{#2}}
\newtheorem{dfn}{Definition}
\newtheorem{prop}[dfn]{Proposition}
\newtheorem{theo}[dfn]{Theorem}
\newtheorem{cor}[dfn]{Corollary}
\newtheorem{rem}[dfn]{Remark}
\newtheorem{lem}[dfn]{Lemma}
\newtheorem{obs}[dfn]{Observation}
\author[P. Dorbec]{Paul Dorbec}
\email{paul.dorbec@labri.fr}
\author[E. Duchene]{\'Eric Duch\^ene}
\email{eric.duchene@univ-lyon1.fr}
\author[A. Fabbri]{Andr\'e Fabbri}
\email{andre.fabbri@etu.univ-lyon1.fr}
\author[J. Moncel]{Julien Moncel}
\email{julien.moncel@iut-rodez.fr}
\author[A. Parreau]{Aline Parreau}
\email{aline.parreau@univ-lyon1.fr}
\author[E. Sopena]{\'Eric Sopena}
\email{eric.sopena@labri.fr}
\address[P. Dorbec and E. Sopena]{
Univ. Bordeaux, LaBRI, UMR5800, F-33400 Talence, France.}
\address{CNRS, LaBRI, UMR5800, F-33400 Talence, France.}
\address[E. Duch\^ene, A. Fabbri, A. Parreau]{
Universit\'e de Lyon, CNRS.}
\address{Universit\'e Lyon 1, LIRIS, UMR 5205, F-69622, France.}
\address[J. Moncel]{CNRS, LAAS, Universit\'e de Toulouse, F-31077, France.}
\title {Ice sliding games}
\begin{document}

 \thanks{This research is supported by the ANR-14-CE25-0006 project of the French National Research Agency.}

\maketitle

\begin{abstract}
This paper deals with sliding games, which are a variant of the better known {\sc pushpush} game. On a given structure (grid, torus...), a robot can move in a specific set of directions, and stops when it hits a block or boundary of the structure. The objective is to place the minimum number of blocks such that the robot can visit all the possible positions of the structure. In particular, we give the exact value of this number when playing on a rectangular grid and a torus. Other variants of this game are also considered, by constraining the robot to stop on each case, or by replacing blocks by walls.
\end{abstract}

\maketitle
~\\
{\bf Keywords:} Combinatorial game theory; Graph theory; Sliding games

\section{Background and definitions}

Sliding/pushing puzzles are classical problems used for entertainment. In a sliding puzzle, entities (often described as robots) are moving around on a grid, and trying to reach a final position. Everytime a robot starts a move in a direction, it slides and cannot stop until it hits another element on the grid (a wall, a block or another robot). In a pushing puzzle, the entities often may stop a move without hitting a wall, but mostly they are also allowed to move some inert blocks by pushing them. Pushing games are known for example as Sokoban, but can also appear as enigmas in video games such as the Zelda sequel. They have been thoroughly studied before, and we refer the reader to the pleasant survey of Demaine ~\cite{Dema01} which also provides a classification of these games. He in particular gives a very detailed overview of known complexity results about such games. Without any surprise, a large majority of these are proved to be NP- or PSPACE-complete. A difference is also made between {\sc push} and {\sc pushpush}-games: in the first game the blocks are pushed one square at a time, while in the second they slide until they meet an obstacle whenever they are pushed.

Sliding games are still less studied, though there are many commercial games using this principle. {\sc Rasende roboter}, {\sc lunar lockout} (marketed in 1999 by Binary Arts) or its predecessor UFO are such examples. Before describing these games, we propose a general definition. An \IS{}{} game is a puzzle where one or more robots are on a grid, trying to reach a flag (the name was initially given in \cite{TCS12}). Each move of a robot consists in sliding in one direction until it meets an obstacle, which stops him. Obstacles include static obstacles, such as walls or blocks, but also other robots. In such a setting, the natural question is whether there is a sequence of moves for one robot to reach the flag (see Figure \ref{fig:intro} for an example of this game).

\begin{center}
\begin{figure}[h!]
\resizebox{4cm}{!}{
\input{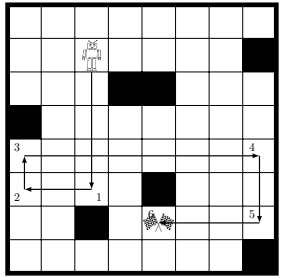}
}
\caption{Robot sliding game: a solution with six moves}
\label{fig:intro}
\end{figure}
\end{center}

This question, where obstacles are predefined, and a single robot is trying to reach a final position was posed recently by K. Burke, on his excellent blog devoted to combinatorial game theory \cite{Burk12}. K. Burke also asked for the minimum number of moves for the robot to reach the goal.
Actually, the resolution of this problem is not very hard, as explained in \cite{TCS12}. Indeed, it reduces to the search for a shortest path between the starting and final squares in an underlying subgraph of the grid. By way of consequence, a simple application of Djikstra's algorithm yields to a polynomial computation of the minimum number of moves of a solution (provided the game representation is not sublinear in the number of squares).

 Yet, another natural question arises when dealing with this problem: what if the player can choose the positions of the obstacles on the board? Of course, if the initial and final squares are fixed, at most one block is necessary, yielding to a solution with at most two moves. But if we ask the player to be able to move the robot to any position of the grid, the problem becomes considerably more challenging. More precisely, given an initial square $P$ of the grid, we ask what is the minimum number of blocks needed (and also how they must be placed) to move the robot from $P$ to any other square. (In that context, the question of the minimum number of moves becomes secondary.) \\

In correlation with this problem, we have identified two research parameters that slightly change the rules detailed above but raise new reflexion on the topic:
\begin{itemize}
\item Blocks can be substituted by walls in the problem. In other words, the robot bounces when hitting an edge of the grid (instead of a square).
\item One can suppose the final square not to be a flag to go through, and that the robot must stop on it. Note that it forces any square of the grid to be adjacent to a block (or a wall).
\end{itemize}
In the following, we differentiate four situations of the game, depending on the precise question and the nature of the obstacles. We denote the game \IS{B/W}{P/S}-K using a superscript P or S depending whether we require that the robot simply {\bf P}asses or need to {\bf S}top on the flagged position, a subscript B or W depending whether the obstacles are {\bf B}locks or {\bf W}alls, and K for the number of robots that we are allowed to move.

As a first example, the marketed game {\sc Rasende roboter}  is related to the game \IS{W}{S}-4 with a particular setting of the obstacles. In {\sc Rasende roboter}, the objective is to find the minimum number of moves of a solution where the initial and final positions are fixed. Its originality is that three additional robots can also be moved and be used as blocks on which bouncing could help the player.
{\sc Rasende roboter} had a good marketing success and also raised the curiosity of researchers in algorithmic game theory. Because of the presence of other robots and the size of the grid ($16\times 16$), simple shortest path algorithms are no more efficient. To the best of our knowledge, only AI algorithms based on multi-agent systems have been proposed to solve the game \cite{BuLR06}.

Another popular game called {\sc lunar lockout} (marketed in 1999 by Binary Arts) is very close to \IS{B}{S}-K, with the of a board with no bounding walls. As in {\sc Rasende roboter}, other robots can be used as movable blocks. Before its marketing, this puzzle was first introduced in 1998 by Yoshigahara and called {\sc UFO}. It was studied in the literature under this original name. Hock proved in \cite{Hock01} that deciding the existence of a solution in {\sc UFO} is NP-complete. In \cite{HaLi03}, a variant of {\sc UFO} is proposed, called {\sc GLLV}, where fixed robots are allowed and played on a general rectangular grid. This version clearly meets \IS{B}{S}-K. Whereas the PSPACE-completeness of {\sc UFO} is open, it is proved in \cite{HaLi03} that the variant which includes fixed blocks is PSPACE-complete.

\medskip

Note that the two games \IS{B/W}{P}-2 played on a rectangular grid are trivial, in the sense that no block is needed. Engels and Kamphans \cite{EK05} proved that the two games \IS{B/W}{S}-3 are also trivial in any rectangular grid. Hence the most interesting instances, at least on rectangular grids, seem to be \IS{}{}-1 and \IS{B/W}{S}-2. In this paper, we will focus in this paper on the games \IS{}{}-1, using a single robot, and denote it simply by \IS{}{}. We use the following definition.
\\

\begin{dfn}
\label{opt}
Let $G_{n,m}$ be a rectangular grid with $n$ rows and $m$ columns. Each square of the grid is denoted by a pair $(i,j)$ with $1\leq i \leq n$ and $1\leq j\leq m$. The parameter $\opt{B/W}{P/S}{G_{n,m}}$ is the minimum number of {\bf B}locks (resp. {\bf W}alls) that needs to be placed on the grid so that, from the starting position $(1,1)$ and for every position $(i,j)$ of $G_{n,m}$ which does not contain a block, there exists a sequence of moves making the robot {\bf P}ass over (resp. {\bf S}top on) $(i,j)$.
\end{dfn}

\begin{rem}
In what follows, we shall see that starting from the position $(1,1)$ is not very restrictive. Indeed, our constructions often guarantee that from any starting position, it is possible to move the robot and stop on $(1,1)$.
\end{rem}

\begin{rem}
In the definition, one can replace the rectangular grid by any kind of grids for which the moves of the robot is clear. We will for example consider in Section \ref{sec:BP} king grids and tori.
\end{rem}

\begin{figure}[h]
\begin{minipage}[c]{.49\linewidth}
\label{fig:intro_blocks}
\begin{center}
\resizebox{3cm}{!}{
\begin{board}{4}{4}
 \block{b1}{1}{3}
 \block{b2}{4}{2}
 \robot{r1}{1}{4}
 \draw[dash pattern=on 2mm off 1mm] (1cm - 0.5cm ,4cm - 0.5cm) -- (4cm - 0.5cm,4cm - 0.5cm) -- (4cm - 0.5cm,3cm - 0.5cm) -- (2cm - 0.5cm,3cm - 0.5cm) -- (2cm - 0.5cm,1cm - 0.5cm) -- (1cm - 0.5cm,1cm - 0.5cm) -- (1cm - 0.5cm,2cm - 0.5cm) -- (3cm - 0.5cm,2cm - 0.5cm) -- (3cm - 0.5cm,1cm - 0.5cm) -- (4cm - 0.5cm,1cm - 0.5cm);
\walltour
\end{board}
}
\caption{$\opt{B}{P}{G_{4,4}}\leq 2$.}
\end{center}
\end{minipage} \hfil
\begin{minipage}[c]{.49\linewidth}
\begin{center}
\resizebox{3cm}{!}{
\begin{board}{4}{4}
 \wall{w1}{0}{1}{1}{1}
 \wall{w2}{0}{3}{1}{3}
 \wall{w3}{1}{2}{2}{2}
 \wall{w4}{2}{0}{2}{2}
 \wall{w5}{2}{1}{3}{1}
 \wall{w6}{2}{3}{2}{4}
 \wall{w7}{3}{2}{4}{2}
 \wall{w8}{3}{2}{3}{3}
 \robot{r1}{1}{4}
 \draw[dash pattern=on 2mm off 1mm] (1cm - 0.5cm ,4cm - 0.5cm) -- (2cm - 0.5cm,4cm - 0.5cm) -- (2cm - 0.5cm,3cm - 0.5cm) -- (1cm - 0.5cm,3cm - 0.5cm) -- (1cm - 0.5cm,2cm - 0.5cm) -- (2cm - 0.5cm,2cm - 0.5cm) -- (2cm - 0.5cm,1cm - 0.5cm) -- (1cm - 0.5cm,1cm - 0.5cm);
 \draw[dash pattern=on 2mm off 1mm] (2cm - 0.5cm ,3cm - 0.5cm) -- (3cm - 0.5cm ,3cm - 0.5cm) -- (3cm - 0.5cm ,4cm - 0.5cm) -- (4cm - 0.5cm ,4cm - 0.5cm) -- (4cm - 0.5cm ,3cm - 0.5cm);
 \draw[dash pattern=on 2mm off 1mm] (3cm - 0.5cm ,3cm - 0.5cm) -- (3cm - 0.5cm ,2cm - 0.5cm) -- (4cm - 0.5cm ,2cm - 0.5cm) -- (4cm - 0.5cm ,1cm - 0.5cm) -- (3cm - 0.5cm ,1cm - 0.5cm);
\walltour
\end{board}
}
\caption{$\opt{W}{S}{G_{4,4}}\leq 9$.}
\label{fig:intro_mur}
\end{center}
\end{minipage}
\end{figure}


We proceed with the resolution of these problem for several instances, which should also stress how this game is related to some well-known problems in graph theory.
We first give in Section~\ref{sec:graph} the definitions and results of related graph problems. In Section~\ref{sec:BP}, we study $\opts{B}{P}$ on different kinds of grids, such as rectangular grids, king grids or tori. In Section~\ref{sec:variant}, the three other variants of the game are explored.

\section{Graph parameters related to \IS{}{}}\label{sec:graph}
The current section presents two optimization problems in graph theory which are in direct correlation with our game.  \\

\noindent
{\sc {Domination in graphs}}

\noindent
Given a graph $G$, a set $D\subseteq V(G)$ is said to be a dominating set (resp. a total dominating set) of $G$ if every vertex of $V(G)\setminus D$ (resp. $V(G)$) is adjacent to a vertex of $D$.

\begin{dfn}
Let $G$ be a graph. The value $\gamma(G)$ (resp. $\gamma_t(G)$), called {\it domination number} (resp. {\it total domination number}) of $G$, corresponds to the minimum cardinality of a dominating set (resp. total dominating set) in $G$.
\end{dfn}

In the next section, we observe that the dominating set problem is directly related to \IS{B}{S}. Indeed, the ability to stop the robot anywhere means that a block must be adjacent to any square of the grid. In other words, the set of blocks needs to be a dominating set of the grid. Total dominating sets appear in the study of the game \IS{B}{P}, as explained further in Subsection~\ref{sec:grids}.\\

Given any graph $G$, the computation of $\gamma(G)$ and $\gamma_t(G)$ are known to be NP-hard problems. However, their values are known for simple classes of graphs, such as grids or paths. As needed later, we give below the total domination number of paths.

\begin{prop}[Klobucar \cite{Klob04}]
Let $P_n$ denote the path with $n$ vertices. We have
\[
\gamma_t(P_n)=\left\{ \begin{array}{cl}
2\lfloor\frac{n}{4}\rfloor + 1 & \mbox{if } n\equiv 1\pmod 4\\
2\lfloor\frac{n}{4}\rfloor & \mbox{otherwise} \end{array} \right.
\]
\end{prop}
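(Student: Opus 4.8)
The plan is to establish the formula for $\gamma_t(P_n)$ by exhibiting an explicit optimal total dominating set (for the upper bound) and then proving a matching lower bound via a counting/covering argument. Throughout I label the vertices of $P_n$ as $1, 2, \dots, n$ along the path, with $i$ adjacent to $i-1$ and $i+1$.

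First I would construct a total dominating set achieving the claimed size. The natural idea is to place dominators in consecutive pairs, since in a total dominating set every vertex (including each dominator) must have a neighbour in the set; isolated dominators are useless on a path because a degree-one or degree-two vertex placed alone cannot dominate itself. So I would cover the path greedily by blocks of four: a pair $\{4k+2, 4k+3\}$ placed inside each group of four consecutive vertices $\{4k+1,\dots,4k+4\}$ totally dominates all four of them (the pair dominates itself and reaches out one step on each side). Writing $n = 4q + r$ with $0 \le r \le 3$, the bulk of the path is handled by $q$ such pairs, giving $2q = 2\lfloor n/4\rfloor$ dominators, and I would then check the small leftover cases $r \in \{0,1,2,3\}$ by hand, verifying that one extra vertex is needed precisely when $r = 1$ (a lone trailing vertex must be dominated and must itself be adjacent to a dominator, forcing one more vertex), and that no extra is needed otherwise. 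This reproduces the stated upper bound.

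For the lower bound, the key step is a covering estimate. Each vertex $v$ in a total dominating set $D$ can ``contribute'' to dominating only the vertices in its closed-neighbourhood contribution, but for \emph{total} domination the right bookkeeping is that each vertex of the path must have a neighbour in $D$, so I would bound how many path-vertices a given dominator can serve. A cleaner route is to partition $P_n$ into consecutive segments and argue that each segment forces a minimum number of dominators; for total domination on paths the sharp statement is that any four consecutive vertices require at least two members of $D$ among the relevant window, which upon summing over a partition into blocks of four yields $2\lfloor n/4\rfloor$ and, in the residue class $n \equiv 1 \pmod 4$, forces one additional dominator to handle the final vertex. I would make this rigorous by an induction on $n$, removing a terminal block of four vertices and its dominators and appealing to the inductive hypothesis on the shorter path, taking care at the boundary where a dominator might straddle two blocks.

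The main obstacle I anticipate is the lower bound at the boundaries between blocks, where a single dominator could in principle serve vertices in two adjacent groups of four, so a naive ``two per block'' count is not immediately tight. The careful treatment of these boundary interactions, together with the case analysis over the four residues $r \bmod 4$, is where the real work lies; the induction must be set up so that the inductive step cleanly accounts for any dominator lying near the cut. Once the residue cases are handled consistently, matching the upper bound construction case by case completes the proof.
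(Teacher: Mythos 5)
Your plan breaks at exactly the point you glossed over: the leftover cases $r=2$ and $r=3$. Your claim that ``no extra is needed otherwise'' is false. If $n=4q+2$, the pairs $\{4k+2,4k+3\}$ for $0\le k<q$ leave the two trailing vertices $4q+1$ and $4q+2$ with no neighbour in the set; the end vertex $4q+2$ has unique neighbour $4q+1$, which must therefore be added, and $4q+1$ in turn needs a neighbour of its own in the set, forcing a second extra vertex. The same happens for $r=3$ (there a trailing pair suffices, but it is still two extra vertices). So the construction yields $2\lfloor n/4\rfloor+2$ for $n\equiv 2,3\pmod 4$, not $2\lfloor n/4\rfloor$ --- and no cleverer construction can do better, since every vertex of a path has degree at most $2$, whence any total dominating set $D$ satisfies $2|D|\ge n$, i.e.\ $\gamma_t(P_n)\ge\lceil n/2\rceil$, which already exceeds $2\lfloor n/4\rfloor$ for $n\equiv 2,3\pmod 4$ (e.g.\ $n=6$: the formula would give $2$, but $\gamma_t(P_6)=4$). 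In other words, the statement as printed cannot be proved because it is itself a transcription slip: Klobucar's actual formula is $\gamma_t(P_n)=\lfloor n/2\rfloor+\lceil n/4\rceil-\lfloor n/4\rfloor$, which equals $2\lfloor n/4\rfloor$ only when $n\equiv 0\pmod 4$, equals $2\lfloor n/4\rfloor+1$ when $n\equiv 1$, and equals $2\lfloor n/4\rfloor+2$ when $n\equiv 2,3\pmod 4$. The paper's own later computations confirm the corrected version rather than the printed one: for instance the construction for $m-2\equiv 6\pmod 8$ in Theorem~\ref{theo:grid} uses $4k+4$ blocks for $m-2=8k+6$, which matches $\gamma_t(P_{8k+6})=4k+4$ and not $2\lfloor(8k+6)/4\rfloor=4k+2$.

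Two further remarks. First, there is no in-paper proof to compare against: the proposition is quoted from Klobucar's paper without proof, so your job was really to reprove a known result --- and a correct reproof must produce the corrected residue terms above. Second, the part of your sketch you worried most about is actually the part that works effortlessly: for any four \emph{consecutive} vertices $i,i+1,i+2,i+3$, vertex $i+1$ needs a neighbour in $D$ among $\{i,i+2\}$ and vertex $i+2$ needs one among $\{i+1,i+3\}$, and these pairs are disjoint, so every block of four consecutive vertices contains at least two members of $D$; summing over a partition into disjoint blocks gives $\gamma_t(P_n)\ge 2\lfloor n/4\rfloor$ with no boundary-straddling issues at all. The genuine work, which your sketch mishandles, is the additive correction on the residual $r$ vertices: $+1$ for $r=1$ and $+2$ for $r\in\{2,3\}$, the latter following quickly from the degree bound $2|D|\ge n$ combined with a parity/endpoint analysis.
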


\bigskip
\noindent
{\sc {Edge cover in graphs}}

\noindent
Given a graph $G$, a set $S\subseteq E(G)$ is said to be an edge cover of $G$ if every vertex of $G$ is incident to at least one edge of $S$.

\begin{dfn}
Let $G$ be a graph. The value $\rho(G)$, called the {\it edge covering number} of $G$, corresponds to the size of a minimum edge cover of $G$.
\end{dfn}

Unlike the domination number, the value $\rho(G)$ can be computed in polynomial time for any graph $G$. In Subsections \ref{sec:iswp} and \ref{sec:isws}, we observe how edge covers interact with the game \IS{}{} played with walls. For example, we use the value $\rho(P_n)$, which is straightforwardly equal to $\lceil n/2\rceil$.

\section{\IS{B}{P} played on various grids}\label{sec:BP}
In all the paper, we consider grids with $n$ rows and $m$ columns. We denote by $R_i$ (resp. $C_j$) the $i^{th}$ row (resp. $j^{th}$ column) of $G$. Position $(i,j)$ is at the intersection of $R_i$ and $C_j$ and in the figures, unless something else is mentionned, $(1,1)$ is the top-left square.

\subsection{Rectangular grids}\label{sec:grids}
In this part, we consider the game \IS{B}{P} on a rectangular grid $G=G_{n,m}$: we place {\em blocks} on $G$ and the robot must be able to {\em pass} everywhere from the square $(1,1)$.

\begin{dfn}
Let $B$ be a set of blocks. The row $R_i$, $2\leq i\leq n-1$ (resp. column $C_j$, $2\leq j\leq m-1$) is said to be totally dominated by $B$ if there is at least one block of $B$ in row $R_{i-1}$ or $R_{i+1}$ (resp. in column $C_{i-1}$ or $C_{i+1}$).
\end{dfn}

If $B$ is a set of blocks in $G_{n,m}$ which totally dominates every column $C_j$, $2\leq j\leq m-2$, then, by moving blocks from columns $C_1$ and $C_m$ to columns $C_3$ and $C_{m-2}$ respectively, every column is still totally dominated by $B$. Hence we get the following:
\begin{obs}
\label{obs:gamma_tilde}
If $B$ is a set of blocks in $G_{n,m}$ which totally dominates every column $C_j$, $2\leq j\leq m-2$, then $|B|\geq \gamma_t(P_{m-2})$.
\end{obs}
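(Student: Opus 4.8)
The plan is to reduce the statement to a pure total-domination statement on the path $P_{m-2}$ whose vertices are the interior columns $C_2,\dots,C_{m-1}$, two of them being adjacent exactly when their indices differ by one. First I would invoke the move described just before the statement: pushing every block of $C_1$ into $C_3$ and every block of $C_m$ into $C_{m-2}$ yields a block set of the \emph{same} cardinality that still totally dominates the required columns and, crucially, leaves the two extreme columns $C_1$ and $C_m$ empty. Hence we may assume without loss of generality that every block of $B$ lies in some column $C_j$ with $2\le j\le m-1$.

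Next I would pass from blocks to columns. Let $D\subseteq\{C_2,\dots,C_{m-1}\}$ be the set of columns containing at least one block of $B$; since several blocks in one column collapse to a single vertex, $|B|\ge |D|$, so it suffices to prove $|D|\ge\gamma_t(P_{m-2})$. The core step is to check that $D$ is a total dominating set of $P_{m-2}$, i.e.\ that every column among $C_2,\dots,C_{m-1}$ has a neighbouring column in $D$. For an interior column $C_j$ this is immediate: total domination of $C_j$ provides a block in $C_{j-1}$ or $C_{j+1}$, which is a path-neighbour of $C_j$ lying in $D$. For the two endpoints I use that $C_1$ and $C_m$ are now empty: total domination of the leftmost inner column forces a block in $C_3$, so $C_2$ has the neighbour $C_3\in D$, and by the same argument on the right-hand side total domination forces a block in $C_{m-2}$, so $C_{m-1}$ has the neighbour $C_{m-2}\in D$. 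Thus every vertex of $P_{m-2}$ is adjacent to a vertex of $D$, so $D$ is a total dominating set and $|D|\ge\gamma_t(P_{m-2})$; combined with $|B|\ge|D|$ this gives the claim.

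The only delicate point, and the one I expect to be the main obstacle, is the treatment of the two endpoints $C_2$ and $C_{m-1}$ of the path. Total domination is a condition on the inner columns, so a priori a block sitting in $C_1$ or in $C_m$ could be the one witnessing the domination of $C_2$ or of $C_{m-1}$, in which case the projected set $D$ would fail to dominate these two ends of $P_{m-2}$ and the reduction would break. This is precisely why the preliminary relocation of blocks out of $C_1,C_m$ and into $C_3,C_{m-2}$ is made beforehand: after it the witnessing blocks are forced to lie inside the path, which is the single place in the whole argument where that move is used.
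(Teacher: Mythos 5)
Your argument is, in substance, the paper's own: the paper proves Observation~\ref{obs:gamma_tilde} in the single sentence preceding it (move the blocks of $C_1$ and $C_m$ to $C_3$ and $C_{m-2}$), leaving implicit exactly what you make explicit --- the collapse of blocks to the set $D$ of occupied columns with $|B|\ge |D|$, and the verification that $D$ totally dominates the path on $C_2,\dots,C_{m-1}$, the two endpoints being the only non-immediate vertices. That elaboration is correct as far as it goes, and you rightly identify the endpoints as the place where the relocation step is actually used.

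The one genuine problem is your right endpoint. You write that ``by the same argument on the right-hand side total domination forces a block in $C_{m-2}$''; the symmetric argument applies total domination to the column $C_{m-1}$, but the hypothesis as printed grants it only for $2\le j\le m-2$. This gap is not repairable under the literal hypothesis, because the literal statement is false: for $m=7$, one block in $C_3$ and one in $C_4$ totally dominate all of $C_2,\dots,C_5$, yet $|B|=2<3=\gamma_t(P_5)$. In path terms, $\{C_3,C_4\}$ totally dominates every vertex of the path $C_2,\dots,C_6$ except the last one, and $\gamma_t$ genuinely increases when that endpoint must also be dominated (as it does whenever $m-2\equiv 1 \pmod 4$). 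The intended hypothesis is clearly $2\le j\le m-1$: this is the range in the paper's definition of totally dominated columns, and it is the range actually available when the observation is invoked for the rows $R_2,\dots,R_{n-1}$ at the end of the proof of Proposition~\ref{prop:grid} (the first case of that proof, which uses the $m-2$ range, inherits the same defect --- but that is the paper's problem, not yours). Under the corrected reading your proof is complete and coincides with the paper's; as written, however, you have silently strengthened the quoted hypothesis at precisely the point you yourself flag as delicate, so you should state explicitly that total domination of $C_{m-1}$ is needed and that the printed bound $m-2$ must be a typo for $m-1$.
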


\begin{prop}
\label{prop:grid}
For each $n\geq m$, we have $\opt{B}{P}{G_{n,m}}\geq \gamma_t(P_{m-2})$.
\end{prop}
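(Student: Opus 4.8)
The plan is to take an arbitrary valid placement $B$ of blocks (one from which the robot, started at $(1,1)$, can pass over every non-block square) and to deduce $|B|\ge\gamma_t(P_{m-2})$, reducing in the favourable case to Observation~\ref{obs:gamma_tilde}. The whole argument rests on one reachability fact, which I would prove first: \emph{if neither $C_{j-1}$ nor $C_{j+1}$ contains a block (with $2\le j\le m-1$), then the robot can never occupy a square of $C_j$.} Indeed, a horizontal move can stop inside $C_j$ only if it is blocked by a block in $C_{j-1}\cup C_{j+1}$ or by the left/right border (columns $1$ and $m$), none of which is available here; a vertical move staying inside $C_j$ presupposes that the robot is already in $C_j$; and the start $(1,1)$ lies in $C_1\neq C_j$. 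An induction on the number of moves then shows that $C_j$ is never visited, so every non-block square of $C_j$ must be passed over by a \emph{horizontal} sweep crossing column $j$. The transposed statement holds verbatim for a row $R_i$ whose neighbours $R_{i-1},R_{i+1}$ are block-free.

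With this in hand I would split according to whether $B$ totally dominates the interior columns. If every column $C_j$ with $2\le j\le m-2$ is totally dominated, then Observation~\ref{obs:gamma_tilde} applies directly and gives $|B|\ge\gamma_t(P_{m-2})$, which is exactly the bound.

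Otherwise some interior column $C_{j_0}$ ($2\le j_0\le m-2$) is not totally dominated, i.e.\ $C_{j_0-1}$ and $C_{j_0+1}$ are empty. By the reachability fact the robot never enters $C_{j_0}$, so each of its non-block squares $(i,j_0)$ is covered by a horizontal sweep lying in row $R_i$; in particular the robot must \emph{stop} in $R_i$ to initiate that sweep. But the first time the robot stops in an interior row $R_i$ ($2\le i\le n-1$) it has arrived there by a vertical move, which can halt at $(i,k)$ only because of a block in $R_{i-1}$ or $R_{i+1}$ (the top/bottom borders being unavailable for interior rows). Hence every interior row containing a non-block square of $C_{j_0}$ is totally dominated, and I would argue that this forces $B$ to totally dominate \emph{all} interior rows; applying the row-analogue of Observation~\ref{obs:gamma_tilde} then yields $|B|\ge\gamma_t(P_{n-2})$, and since $n\ge m$ we have $\gamma_t(P_{n-2})\ge\gamma_t(P_{m-2})$, so the bound holds again. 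This is exactly where the hypothesis $n\ge m$ enters.

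The step I expect to be the main obstacle is this last one: turning ``every interior row meeting $C_{j_0}$ in a non-block square is totally dominated'' into ``all interior rows are totally dominated''. The difficulty is the mixed situation in which some interior column and some interior row are simultaneously undominated: the reachability fact (applied to both) shows their intersection square can be passed over by neither a horizontal nor a vertical sweep, so it must itself be a block, and one then has to chase these forced blocks — together with the fresh coverage obligations they create in the adjacent empty columns — to reach a genuine contradiction or an honest count. Handling this interaction cleanly, rather than just the easy pure-column and pure-row cases, is the crux, and it is also where one would invoke the block-relocation trick behind Observation~\ref{obs:gamma_tilde} to absorb blocks sitting in the two border columns.
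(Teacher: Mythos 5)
Up to the point you flag as the crux, your argument coincides with the paper's proof: the same reachability fact (the robot can never stop in a column whose two neighbouring columns are block-free), the same case split on whether all interior columns are totally dominated, and the same deduction that every interior row meeting $C_{j_0}$ in a non-block square must be totally dominated --- indeed, dominated by a block \emph{outside} $C_{j_0}$, since the vertical move that first halts the robot in such a row takes place in a column $k\neq j_0$ (this refinement, which your reachability fact yields implicitly, is needed below). But the gap you identify is genuine, and moreover the target you set yourself is false: one cannot force $B$ itself to totally dominate \emph{all} interior rows. If $(i,j_0)\in B$, the remaining squares of row $R_i$ can all be passed by vertical sweeps of the other columns, so the robot never needs to stop in $R_i$ and nothing obliges $R_{i-1}\cup R_{i+1}$ to contain a block. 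Such configurations occur in perfectly valid solutions, so the ``chase of forced blocks'' you anticipate cannot end in a contradiction, and the honest count needs a different mechanism than the relocation behind Observation~\ref{obs:gamma_tilde} (which moves blocks from the border columns $C_1,C_m$ to $C_3,C_{m-2}$ --- not the relocation needed here).

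The missing idea is a one-line block relocation \emph{inside} the undominated column. Let $I_{j_0}=\{i:(i,j_0)\in B\}$ and $B'=B\setminus\{(i,j_0):i\in I_{j_0}\}$. As you showed, every row $R_i$ with $2\le i\le n-1$ and $i\notin I_{j_0}$ is totally dominated by $B'$. Now set
\[
B^*=B'\cup\{(i-1,j_0):i\in I_{j_0},\ i>1\}.
\]
Each row $R_i$ with $i\in I_{j_0}$ is dominated by the new block in $R_{i-1}$, while the rows with $i\notin I_{j_0}$ stay dominated because their dominating blocks lie in $B'$ and are untouched by the shift (this is exactly why the ``outside $C_{j_0}$'' refinement matters: a dominating block at $(i\pm 1,j_0)$ would have been moved). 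Since $|B^*|\le|B'|+|I_{j_0}|=|B|$, the row analogue of Observation~\ref{obs:gamma_tilde} gives $|B|\ge|B^*|\ge\gamma_t(P_{n-2})\ge\gamma_t(P_{m-2})$, the last inequality from $n\ge m$ and the monotonicity of $\gamma_t$ on paths --- which is precisely where the hypothesis $n\ge m$ enters, as you correctly predicted. With this replacement of ``$B$ dominates all interior rows'' by ``some $B^*$ with $|B^*|\le|B|$ dominates all interior rows,'' your outline closes and the mixed row/column case analysis you feared is unnecessary.
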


\begin{proof}
Consider an optimal solution $B$ of \IS{B}{P} for $G_{n,m}$.
If every column $C_j$, $2\leq j\leq m-2$, is totally dominated by $B$ then, by Observation~\ref{obs:gamma_tilde}, we get $\opt{B}{P}{G_{n,m}}\geq \gamma_t(P_{m-2})$.

 Assume now that there exists a column $C_{j}$, $2\leq j\leq m-2$, which is not totally dominated by $B$. Let $I_j=\{i:(i,j)\in B\}$ be the line indices of the blocks of $C_j$.
 The robot can not initiate a vertical move in $C_j$ since to stop in $C_j$ it needs a block in column $C_{j-1}$ or $C_{j+1}$. Hence it has to go horizontally trough every row $R_i$, $i\in [2,n-1]\setminus I_j$. Therefore, each such row must be totally dominated by $B'=B\setminus \{(i,j):i\in I_j\}$. We define the set of blocks $B^*$ as follows:
 \[
 B^*=B' \cup \{(i-1,j):i\in I_j, i>1\}.
 \]
We claim that each row $R_i$, $2\leq i\leq n-1$, is totally dominated by the set $B^*$. Indeed, if $i\notin I_j$, then $R_i$ is totally dominated by $B'$ as observed above. Otherwise there is a block in row $R_{i-1}$. We finally get
\[
|B|\geq |B^*|\geq \gamma_t(P_{n-2}) \geq \gamma_t(P_{m-2})
\]
by Observation~\ref{obs:gamma_tilde} and the result follows.
\end{proof}

\begin{lem}
\label{lem:pattern_grid}
Assume that the pattern $P_n$ of size $n\times 8$ depicted on Figure \ref{fig:patterngrid} $(a)$ is in position $(1,j)$ ($(1,j)$ being the top-left square of the pattern). Then, if a robot is able to enter horizontally in each of the four positions $(1,j)$, $(n,j)$, $(2,j+7)$, and $(n-1,j+7)$, it can pass vertically through all the columns $C_j$ to $C_{j+7}$. In addition, if a robot enters horizontally the pattern $P_n$ in position $(1,j)$, it can leave it horizontally on rows $R_2$ and $R_{n-1}$ by going left and on rows $R_1$ and $R_n$ by going right (see Figure \ref{fig:patterngridleave}).
\end{lem}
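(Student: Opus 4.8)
The plan is to prove both assertions by directly following, in the explicit pattern of Figure~\ref{fig:patterngrid}$(a)$, the slide that the robot performs after each admissible entry, and checking that the announced squares are indeed swept. The pattern is symmetric under the top--bottom reflection $(r,c)\mapsto(n+1-r,c)$, which swaps the entry point $(1,j)$ with $(n,j)$ and $(2,j+7)$ with $(n-1,j+7)$; I would exploit this to halve the case analysis, treating in detail only the entries at $(1,j)$ and $(2,j+7)$ and recovering the behaviour from $(n,j)$ and $(n-1,j+7)$ by applying the symmetry.

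For the first assertion I would argue that the four entry points together give vertical access to all eight columns, two per entry point. Concretely, I would trace the slide starting from $(1,j)$: the robot moves right along $R_1$, is stopped by the first block of the pattern, then turns downward and slides the full height of some column $C_{j+k}$, thereby passing over every square of that column; a block near the bottom then deflects it so that, after one or two further moves, it enters and sweeps a second column. Reading the slide from $(2,j+7)$ in the same way accounts for two more columns, and the two reflected entries account for the remaining four. The key point to confirm here is the covering statement: that the eight columns are partitioned among the entries with none missed, which is precisely what the block placement in Figure~\ref{fig:patterngrid}$(a)$ is designed to guarantee.

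For the second assertion I would again start the robot at $(1,j)$ and exhibit the four escape routes drawn in Figure~\ref{fig:patterngridleave}. Two of them terminate with a leftward move along $R_2$ and along $R_{n-1}$, the other two with a rightward move along $R_1$ and along $R_n$; in each case one reads off the successive deflecting blocks and checks that the robot can be steered onto the relevant row and that the final move is unobstructed until it leaves the pattern on the claimed side.

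The main obstacle is bookkeeping rather than any single hard idea: one must verify that a block responsible for redirecting the robot on one route does not prematurely stop it (or fail to stop it) on another, and that every intended full-column slide really is free of blocks between its two endpoints. The top--bottom symmetry halves this work, and what remains is a finite, mechanical inspection of the admissible slides in the two figures.
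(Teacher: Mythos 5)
Your overall strategy---a finite, mechanical inspection of the slides drawn in the figures---is in substance exactly the paper's proof, which consists of nothing more than the routes displayed on Figure~\ref{fig:patterngrid}. However, the shortcut on which you organise the verification rests on a false premise: the pattern $P_n$ is \emph{not} symmetric under the top--bottom reflection $(r,c)\mapsto(n+1-r,c)$. Reading the blocks off Figure~\ref{fig:patterngrid}$(a)$, they sit at $(1,j+1)$, $(2,j+5)$, $(n-1,j+6)$ and $(n,j+2)$; the reflection sends this set to $\{(n,j+1),\,(n-1,j+5),\,(2,j+6),\,(1,j+2)\}$, a different set of squares. The asymmetry matters from the very first move: a robot entering at $(1,j)$ stops immediately on $(1,j)$ (block at $(1,j+1)$), whereas a robot entering at $(n,j)$ slides one square further, to $(n,j+1)$ (block at $(n,j+2)$), and the two itineraries diverge completely from there. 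So your plan of ``treating in detail only the entries at $(1,j)$ and $(2,j+7)$ and recovering the behaviour from $(n,j)$ and $(n-1,j+7)$ by applying the symmetry'' would produce routes that do not exist on the board; all four entries must be traced independently, which is what the figure does.

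Your guessed bookkeeping for the first assertion is also not how the pattern operates, and it would not survive the inspection you propose: the eight columns are not partitioned two per entry. In the actual routes, the entry at $(n,j)$ alone generates a long chain $(n,j+1)\to(2,j+1)\to(2,j+4)\to(n,j+4)\to(n,j+3)\to(1,j+3)\to(1,j+2)\to(n-1,j+2)\to(n-1,j+5)\to(3,j+5)$ that sweeps five of the eight columns; the entry at $(1,j)$ sweeps only column $C_j$, and the two right-hand entries sweep $C_{j+6}$ and $C_{j+7}$ respectively. Moreover several columns (e.g.\ $C_{j+2}$, $C_{j+5}$, $C_{j+6}$, $C_{j+7}$) are swept vertically only on their block-free stretch, the leftover squares such as $(1,j+5)$ or $(n,j+6)$ being covered by the horizontal passes along $R_1$ and $R_n$---so ``slides the full height of the column'' is not literally available either. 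None of this requires a new idea to fix: drop the symmetry claim, trace all four entry routes as in Figure~\ref{fig:patterngrid}$(a)$ and the single-entry route of Figure~\ref{fig:patterngridleave}, and your mechanical verification coincides with the paper's argument. As written, though, the halved case analysis is a step that fails.
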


\begin{proof}
The routes followed by the robot to satisfy the above lemma are shown on Figure \ref{fig:patterngrid}.
\end{proof}

\begin{figure}
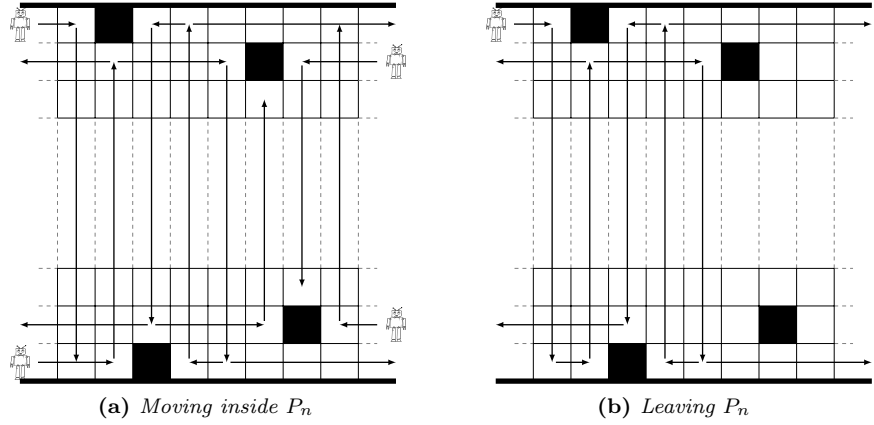

  \begin{center}
    \subfloat[Moving inside $P_n$]{
      \scalebox{0.5}{
        \input{carre-pattern}
      }
    }
    \hfil
    \subfloat[\label{fig:patterngridleave}Leaving $P_n$]{
      \scalebox{0.5}{
        \input{carre-pattern2}
      }
    }
    \caption{Pattern of size $n\times 8$ for \IS{B}{P} on grid and its representation.}
\label{fig:patterngrid}
\end{center}
\end{figure}

\begin{theo}\label{theo:grid}
For any $n\geq m$, $m\neq 10$, we have $\opt{P}{B}{G_{n,m}}=\gamma_t(P_{m-2})$.\\
For any $n\geq 10$, we have $\opt{P}{B}{G_{n,10}}=5$.
\end{theo}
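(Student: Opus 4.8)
The lower bound for the generic case is already in hand: for every $n \ge m$ (and in particular for $m \neq 10$), Proposition~\ref{prop:grid} gives $\opt{B}{P}{G_{n,m}} \ge \gamma_t(P_{m-2})$. So the plan splits into three remaining tasks: (i) a matching upper bound $\opt{B}{P}{G_{n,m}} \le \gamma_t(P_{m-2})$ for $m \neq 10$; (ii) the upper bound $\opt{B}{P}{G_{n,10}} \le 5$; and (iii) the sharpened lower bound $\opt{B}{P}{G_{n,10}} \ge 5$, which does \emph{not} follow from Proposition~\ref{prop:grid} since the latter only yields $\gamma_t(P_8)=4$. Throughout I would invoke the transpose symmetry to assume $n \ge m$, which both licenses Proposition~\ref{prop:grid} and keeps the $n\times 8$ gadgets of Lemma~\ref{lem:pattern_grid} well defined.

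For task (i) the plan is to assemble the block set by concatenating copies of the $n\times 8$ gadget of Lemma~\ref{lem:pattern_grid} along the columns. That gadget spends $4$ blocks on $8$ columns, which is precisely the density of an optimal total dominating set of a path (two blocks per four vertices, matching Observation~\ref{obs:gamma_tilde}). Stacking $\lfloor (m-2)/8\rfloor$ gadgets and closing with a shorter end-gadget should land on exactly $\gamma_t(P_{m-2})$ blocks; I would verify the count separately in the four residue classes of $m \bmod 4$ using the path-domination formula, and treat the very small widths $m$ (where a single gadget does not fit) as direct special cases. The navigation is delegated entirely to Lemma~\ref{lem:pattern_grid}: inside each gadget the robot sweeps all eight of its columns vertically, while the \emph{leaving} clause (exit on rows $R_1,R_2,R_{n-1},R_n$) is what passes the robot from one gadget to the next and lets it reach the four entry points $(1,j),(n,j),(2,j+7),(n-1,j+7)$ needed to fire the following gadget. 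The boundary columns $C_1$ and $C_m$ are swept for free against the side walls, and the whole chain is bootstrapped from $(1,1)$ by first dropping down $C_1$.

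Task (iii) is where $m=10$ becomes genuinely exceptional, and it is the heart of the argument. Here the interior is exactly the eight columns $C_2,\dots,C_9$, i.e.\ a single gadget with \emph{no} spare columns on either side to initiate the entry chain. For the upper bound (ii) I would keep the $4$-block gadget on $C_2,\dots,C_9$ and add one further block whose only role is to create, against the wall, the horizontal stop at the one entry point the boundary cannot otherwise provide, then check that these $5$ blocks let the robot pass everywhere. For the lower bound I would assume a $4$-block solution and follow the dichotomy in the proof of Proposition~\ref{prop:grid}: equality at $4$ forces a rigid configuration, namely (up to the symmetry $i\mapsto 9-i$) the unique size-$4$ total dominating set $\{2,3,6,7\}$ of $P_8$ realized column-wise. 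A direct reachability analysis of this rigid configuration then exhibits a square the robot can never pass over, contradicting optimality and giving $\opt{B}{P}{G_{n,10}} \ge 5$.

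The main obstacle is precisely this $m=10$ lower bound: unlike the generic case it cannot be reduced to a pure counting/total-domination estimate, and it demands pinning down the essentially unique optimal column configuration and then exhibiting an unreachable square from the sliding dynamics forced at the two walls. A secondary, bookkeeping obstacle lies in making the generic upper bound \emph{exact} rather than merely asymptotic: the end-gadget and the $m \bmod 4$ case split must be tuned so the block total lands on $\gamma_t(P_{m-2})$ on the nose, and one must confirm that the entry/exit chaining of Lemma~\ref{lem:pattern_grid} can always be started from the corner square $(1,1)$ at the left boundary.
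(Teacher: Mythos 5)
Your proposal follows essentially the same route as the paper: the lower bound from Proposition~\ref{prop:grid}, the upper bound by chaining the $n\times 8$ gadget of Lemma~\ref{lem:pattern_grid} with closing patterns, the $m=10$ upper bound by a one-extra-block variant of an end-pattern, and the $m=10$ lower bound by forcing, via the unique minimum total dominating set of $P_8$, the four blocks into columns $C_3,C_4,C_7,C_8$ and then exhibiting an unreachable square (the paper pins the blocks to the first/last rows, leaving four candidate configurations, in each of which $(1,5)$ cannot be passed). The only divergence is the bookkeeping you yourself flag as the delicate point: the paper's case split is on $m-2 \bmod 8$, and for residues $0,1,2$ it closes with \emph{wider} end-patterns (widths $16$, $9$, $10$, consuming one or two fewer full gadgets) rather than a shorter closing gadget --- which is exactly the mechanism that makes $m=10$ exceptional, as you correctly isolated.
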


\begin{proof}
Let $n\geq m > 10$. We consider the following cases:
\begin{itemize}
\item $m-2\equiv 3,4,5,6$ or $7\pmod 8$. The solution is build by gluing $\lfloor \frac{m-2}{8}\rfloor$ copies of $P_n$ starting from column $C_2$ (leaving column $C_1$ empty), the remaining columns being filled by the patterns of Figure \ref{fig:residus}. From Lemma \ref{lem:pattern_grid}, all the columns of all the copies of $P_n$ but the last one are visited. Figure \ref{fig:residus} shows that all the remaining rightmost columns can be visited by entering from the left on rows $R_1$ and $R_n$. The robot is also able to leave these columns from rows $R_2$ and $R_{n-2}$, so that the columns of the last copy of $P_n$ can be visited.
\item $m-2\equiv 1$ or $2\pmod 8$. Using $\lfloor \frac{m-10}{8}\rfloor$ copies of $P_n$, the proof is similar to the previous case using the patterns depicted in Figure \ref{fig:residus2}.
    \item $m-2\equiv 0\pmod 8$. The proof is again similar using $\lfloor \frac{m-18}{8}\rfloor$ copies of $P_n$ (recall that $m\neq 10$) and the pattern depicted in Figure \ref{fig:residus3}.
\end{itemize}

\begin{figure}[!]
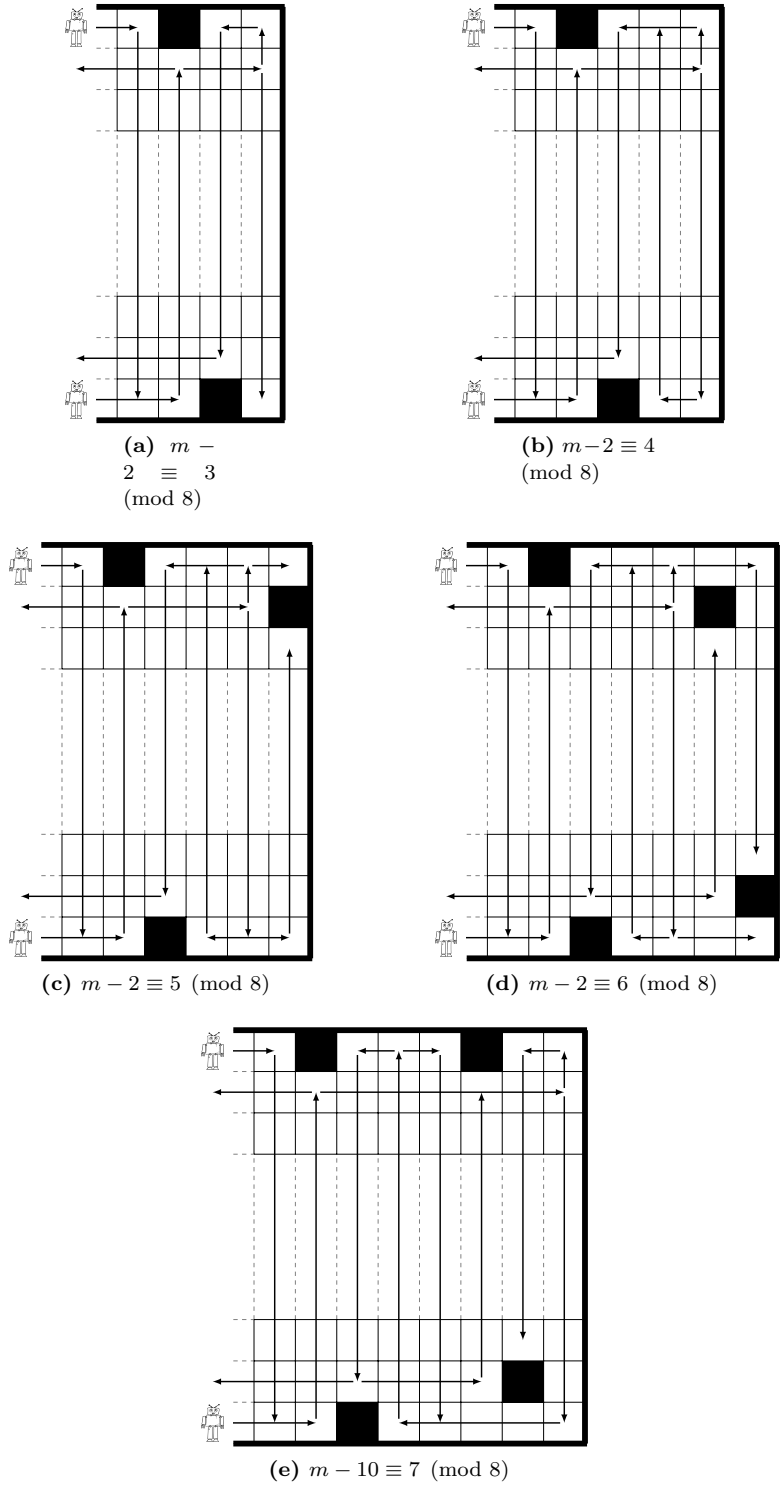

  \begin{center}
    \subfloat[$m-2\equiv 3\pmod 8$]{
      \scalebox{0.55}{
        \input{carre-mod3}
      }
    }
    \hfil
    \subfloat[$m-2\equiv 4\pmod 8$]{
      \scalebox{0.55}{
        \input{carre-mod4}
      }
    }
\hfil
    \subfloat[$m-2\equiv 5\pmod 8$]{
      \scalebox{0.55}{
        \input{carre-mod5}
      }
    }
    \hfil
    \subfloat[$m-2\equiv 6\pmod 8$]{
      \scalebox{0.55}{
        \input{carre-mod6}
      }
    }
\hfil
    \subfloat[$m-10\equiv 7\pmod 8$]{
      \scalebox{0.55}{
        \input{carre-mod7}
      }
    }
    \caption{Visiting the remaining columns when $m-2\equiv 3,4,5,6,7\pmod 8$}
\label{fig:residus}
\end{center}
\end{figure}

\begin{figure}
  \begin{center}
    \subfloat[$m-10\equiv 1\pmod 8$]{
      \scalebox{0.45}{
        \input{carre-mod1}
      }
    }
    \hfil
    \subfloat[$m-10\equiv 2\pmod 8$]{
      \scalebox{0.45}{
        \input{carre-mod2}
      }
    }
    \caption{Visiting the remaining columns when $m-10\equiv 1,2\pmod 8$}
\label{fig:residus2}
\end{center}
\end{figure}

We now consider the case $m<10$, $n\geq m$. If $m=1,2$, clearly no block is needed. For $m=3$, put a block in position $(n,3)$. For $m=4$, put two blocks in positions $(n,3)$ and $(1,4)$. For $5\leq m\leq 9$, leave column $C_1$ empty glued to the pattern of width $(m-1)$ given in Figure~\ref{fig:residus}.

In each case, the number of blocks used in our solutions is exactly $\gamma_t(P_{m-2})$.

\begin{figure}
\scalebox{0.45}{
\input{carre-mod0}
}
\caption{Visiting the remaining columns when $m-18\equiv 0\pmod 8$}
\label{fig:residus3}
\end{figure}

In the case $m=10$, one can easily show that five blocks are enough by using the pattern of Figure~\ref{fig:residus2} $(a)$ without the first column. It now remains to prove that $\gamma_t(P_8)=4$ blocks are not sufficient. Suppose on the contrary that $\opt{B}{P}{G_{n,10}}=\gamma_t(P_8)=4$ for $n\geq 10$, and consider an optimal solution $B$. According to the proof of Proposition~\ref{prop:grid}, one can assume that every column $C_j$, $2\leq j\leq m-2$ is totally dominated by $B$. There exists a unique minimum total dominating set in $P_8$, depicted in Figure~\ref{fig:tds_P8}. 
Consequently, the four blocks of $B$ are located in columns $C_3,C_4,C_7$ and $C_8$.

\begin{figure}[!h]
\scalebox{1}{
\begin{tikzpicture}[auto, thick]

\foreach \i in {1,...,8}
\node[sommet] (\i) at (\i,0) {};

\foreach \j in {2,3,6,7}
\node[sommet] () at (\j,0) {X};

\draw[arete] (1) -- (2) --(3)--(4)--(5)--(6)--(7)--(8);

\end{tikzpicture}
}
\caption{Minimum total dominating set in $P_8$}
\label{fig:tds_P8}
\end{figure}

Now, since column $C_2$ is totally dominated by column $C_3$, the block in $C_3$ is necessarily located on the first or last row. Otherwise, it would not be possible to make the robot stop in column $C_2$ since there is no block in $C_1$. By symmetry, the block in $C_8$ is also located on the first or last row. Now, with the same argument, since $C_3$ is totally dominated by $C_4$ and there are no blocks in $C_1$ and $C_2$, the only way to make the robot stop on column $C_3$ is to place the block of $C_4$ in the first or last row, and not adjacent to the block in $C_3$. Ditto for the block in $C_7$. There are thus only $4$ possible sets for $B$, but for each of them, the position $(1,5)$ is not reachable.
\end{proof}

\begin{rem}
  Note that our constructions preserves the accessibility of the robot to every square of the grid if the starting position is different from $(1,1)$. Indeed, one can verify that from any position in $P_n$ or in the ending patterns, there exists a sequence of moves landing in $(1,1)$.
\end{rem}

\subsection{Tori}
We now consider the torus grid $T_{n,m}$ and compute the value $\opt{B}{P}{T_{n,m}}$ for any values of $n$ and $m$. Indices are taken modulo $m$ for columns and modulo $n$ for rows.

We first give a lower bound, similar to Proposition~\ref{prop:grid}.
\begin{prop}
For any $n\geq m$, $\opt{B}{P}{T_{n,m}}\geq \frac{m-1}{2}$.
\end{prop}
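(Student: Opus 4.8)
The plan is to adapt the lower-bound argument of Proposition~\ref{prop:grid} to the torus, where the rows and the columns are now each arranged in a cycle and there are no bounding walls to help the robot stop. I will argue by contradiction: assume an optimal block set $B$ satisfies $|B| < \frac{m-1}{2}$, and derive that too many rows must fail to be dominated.

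First I set up the column count. Call a column \emph{totally dominated} if at least one of its two neighbouring columns (indices mod $m$) contains a block, and let $J$ be the set of columns carrying at least one block, so $|J| \le |B|$. Each block-column can totally dominate only its two cyclic neighbours, so the set of totally dominated columns has size at most $2|J| \le 2|B| \le m-2$ (the last step using $2|B| < m-1$ and integrality). Hence at least two columns are \emph{not} totally dominated, and since at most one of them can be the starting column $C_1$, at least one such column $C_j$ has $j \neq 1$.

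The heart of the argument is a movement analysis for this column $C_j$. Because neither $C_{j-1}$ nor $C_{j+1}$ contains a block, the robot can never come to rest in $C_j$ via a horizontal move; and since $j\neq 1$ it does not start there either, so by induction on the move sequence it can never rest in $C_j$ at all (a vertical stop inside $C_j$ would require the robot to already be at rest in $C_j$). Consequently every square $(i,j)\notin B$ must be visited by a \emph{horizontal} move along row $R_i$. To perform such a move the robot must first stop in row $R_i$, and for any visit with $i\neq 1$ this first stop can only be produced by a vertical move halted by a block in $R_{i-1}$ or $R_{i+1}$; moreover that block lies in a column other than $C_j$, since the robot never rests in $C_j$. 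Writing $I_j=\{i:(i,j)\in B\}$ and $B'=B\setminus\{(i,j):i\in I_j\}$, this shows that every row $R_i$ with $i\neq 1$ and $i\notin I_j$ is totally dominated by $B'$.

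To finish I reuse the shifting device from Proposition~\ref{prop:grid}: replacing each block of $B$ lying in $C_j$ by its cyclic up-shift $(i-1 \bmod n,\,j)$ yields a set $B^*$ with $|B^*|=|B|$ in which \emph{every} row except possibly $R_1$ is totally dominated (rows outside $I_j$ by $B'$, rows in $I_j$ by the shifted block directly above). Reading this in the cycle of rows, the rows carrying a block of $B^*$ cover the $n-1$ rows other than $R_1$, and as each such row helps only its two neighbours we obtain $|B|=|B^*|\ge \frac{n-1}{2}\ge \frac{m-1}{2}$ since $n\ge m$, contradicting the assumption. The step requiring the most care, and the main obstacle, is the movement analysis establishing that the robot never rests in $C_j$ and that each relevant interior row must be dominated from an adjacent row outside $C_j$; the exceptional role of the start square $(1,1)$ there is exactly what weakens the bound from $\frac{m}{2}$ to $\frac{m-1}{2}$.
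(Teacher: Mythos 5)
Your proof is correct and takes essentially the same approach as the paper: the same dichotomy on whether every non-starting column is totally dominated, and the same movement analysis showing the robot can never come to rest in an undominated column $C_j$, so that every row containing a free square of $C_j$ (except the starting row) must be dominated by a block outside $C_j$. The only cosmetic difference is the final count, where you recycle the block-shifting device of Proposition~\ref{prop:grid} to build $B^*$ and count dominated rows, while the paper directly counts $n_j+\frac{n-n_j-1}{2}\geq\frac{m-1}{2}$; both yield the same bound.
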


\begin{proof}
Consider an optimal solution $B$ of \IS{B}{P} for $T_{n,m}$.
If for every column $C_j$ which is not the starting column of the robot, there is a block in column $C_{j-1}$ or $C_{j+1}$, then there are at least $\frac{m-1}{2}$ blocks in the solution.

Hence we assume that there exists a column $C_j$, not the starting column of the robot, such that there is no block in columns $C_{j-1}$ and $C_{j+1}$. Let $n_j$ be the number of blocks in column $C_j$. The robot can not go vertically in column $C_j$ since to stop in column $C_j$ it needs a block in column $C_{j-1}$ or $C_{j+1}$. Hence it has to go horizontally trough the $n-n_j$ free squares of the column. But to go horizontally on row $R_i$, either the robot starts on this row, or there is a block in row $R_{i-1}$ or $R_{i+1}$. Thus, to go through the $n-n_j$ free squares of $C_j$, the robot needs at least $\frac{n-n_j-1}{2}$ blocks. Note that these blocks are not on column $C_j$. At the end, the solution has at least $\frac{n-n_j-1}{2}+n_j\geq \frac{m-1}{2}$ blocks.
\end{proof}

We will prove that this lower bound is reached for $m\geq 6$. For that, we will prove by induction the following stronger statement:

\begin{prop}\label{prop:indtore}
For any $m\geq 6$, there exists a solution of \IS{P}{B} on $T_{m,m}$ with $\frac{m-1}{2}$ blocks, such that:
\begin{enumerate}
\item if $i_m$ is the maximum index of a row with a block, the robot can pass horizontally on the row $R_{i_m+1}$,
\item the robot can pass vertically on all the columns.
\end{enumerate}
\end{prop}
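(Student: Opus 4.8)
The plan is to prove the statement by induction on $m$, increasing $m$ in steps of two so that the block count $\frac{m-1}{2}$ (read as $\lceil\frac{m-1}{2}\rceil$ in general, which equals $\frac{m-1}{2}$ exactly when $m$ is odd) grows by precisely one at each step. The two auxiliary conditions are not decorative: condition (2) already certifies a solution of \IS{B}{P}, since every square lies in a column through which the robot passes vertically and is therefore passed over, while condition (1) is the interface that lets one augmented configuration be grown into the next. So the real content is to exhibit a family of configurations, each an explicit staircase of $\lceil\frac{m-1}{2}\rceil$ blocks, for which both conditions propagate. Throughout I would rely on the two elementary torus mechanics: a block in a column gives the robot a stopping point for vertical moves there and stopping points in its two horizontally adjacent cells from which it can redirect; and to pass vertically through a column the robot first needs a single cell of that column to land on, which it obtains from a block in an \emph{adjacent} column (after which a vertical move sweeps the whole column, stopping at a block if present or completing a full loop on the torus otherwise). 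This is exactly the one-block-per-adjacent-pair-of-columns accounting underlying the preceding lower bound.

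First I would settle the base cases $m=6$ and $m=7$ by giving the block placements explicitly and drawing the robot's routes, just as was done for the patterns in the rectangular-grid case; for these small tori one checks directly that every column is passed vertically and that the designated row $R_{i_m+1}$ can be traversed horizontally.

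For the inductive step, assume a configuration $B$ on $T_{m,m}$ satisfying (1) and (2), with largest occupied row index $i_m$, so that the robot passes horizontally along $R_{i_m+1}$. I would build a configuration on $T_{m+2,m+2}$ by inserting two new rows and two new columns and adding a single new block, placed so as to extend the staircase and become the new lowest block, sitting in one of the two new columns on (a shift of) the old highway row $R_{i_m+1}$. Coverage of the original columns is preserved: each keeps a block in it or in an adjacent column, so the robot can still land on a cell and pass vertically through the now-longer column. The new block, together with the blocks already present, is adjacent to both new columns, so the robot---delivered onto the old highway row by hypothesis (1)---can stop beside the new block, turn into each new column and pass vertically through it. Finally I would re-establish (1) and (2) for $T_{m+2,m+2}$: condition (2) now holds for all columns, old and new; and condition (1) holds with the updated value $i_{m+2}=i_m+1$, the robot reaching $R_{i_m+2}$ by sliding north against the new lowest block and then traversing that (block-free) row horizontally.

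The main obstacle is the inductive step, and within it the routing. One must pin down the exact insertion positions of the two new columns and the new block so that (i) the robot can be funnelled from the old highway row into both new columns and back out, (ii) none of the original vertical passes is obstructed by the inserted rows and columns, and (iii) the precise horizontal-passage row demanded by condition (1) is re-created on the larger torus. The counting is immediate, since $\frac{m-1}{2}+1=\frac{m+1}{2}$; the care lies entirely in checking, route by route, that conditions (1) and (2) survive the augmentation, which is cleanest to present by displaying the enlarged configuration together with the robot's routes in a figure, exactly as in the grid case.
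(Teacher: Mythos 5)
Your overall architecture --- induction on $m$ with conditions (1) and (2) as the propagating invariants, plus explicitly drawn base cases --- is the same as the paper's, but your inductive step of $+2$ with a single added block does not work, and the failure is not a routing detail that figures would fix: it is a counting obstruction. On $T_{m,m}$, the robot can only stop in a column $C_j$ at the end of a horizontal move, so every column except the starting one must have a block in an adjacent column. With the exact budget $\lceil\frac{m-1}{2}\rceil$, this is tight for odd $m$: the block columns must be distinct with pairwise disjoint neighbourhoods $\{c-1,c+1\}$, which forces them to come in \emph{adjacent pairs} $\{c,c+1\}$, each pair cleanly covering four consecutive columns (possibly with one singleton anchored at the start column). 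A lone block spliced into a fresh column cannot satisfy this: its own column needs a block in an adjacent column. Concretely, take the step $m=7\to 9$: the $3$-block solution has block columns $\{2\}$ (singleton, start column $2$ free) and $\{5,6\}$. Wherever you insert the two new columns at the end of the staircase and place one block in one of them adjacent to the old pair, you create a triple of consecutive block columns whose neighbourhoods overlap, so the $4$ blocks cover at most $7$ of the required $8$ columns --- some column (the displaced old column beyond the insertion, or the second new column) has no block in any adjacent column, and the robot can never stop in it, violating condition (2). The same computation kills $8\to 10$. A repaired $+2$ step exists combinatorially (e.g.\ pairs $\{2,3\}$ and $\{6,7\}$ on $T_9$), but it requires inserting the two new columns at \emph{two different sites}, relocating the free column and hence the robot's starting square, and re-deriving all routes from the new start --- none of which your invariants (which keep the old start and use the old highway row to reach a corner-appended block) can support. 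Note also that your parenthetical ``each keeps a block in it or in an adjacent column'' is wrong on both counts: a block \emph{in} a column does not let the robot stop there (first entry into a column is necessarily horizontal), and adjacency to the old last block column is exactly what the insertion destroys.

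The paper sidesteps all of this by making the inductive module \emph{self-sufficient}: it uses a $4\times 8$ pattern of four blocks whose columns form two adjacent pairs, so the pattern covers its own eight columns with no overlap and no help from (or interference with) the old configuration; it is appended at $(i_m+1,j_m+1)$, the robot enters it along the old highway row $R_{i_m+1}$, and condition (1) is re-established on the row below the pattern. The price of the step-$8$ induction is eight base cases, $m=6$ through $13$, given by explicit figures (odd $m$ drawn, even $m$ obtained by deleting one grey row and column). This also explains why your step size is structurally impossible rather than merely inconvenient: the smallest growable module must contain a full adjacent pair of block columns, so any modular induction has step at least $4$; a step-$2$ argument cannot append, it must restructure.
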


\begin{proof}
We will use for the induction the pattern of size $4\times 8$ of Figure~\ref{fig:patterntore}. The figure gives a proof of the following lemma:

\begin{lem}
Assume that the pattern of Figure \ref{fig:patterntore} is in position $(i,j)$ ($(i,j)$ is the top-left square of the pattern) and that there is no block in any column and row intersecting the pattern (i.e. rows $R_i$ to $R_{i+3}$ and columns $C_j$ to $C_{j+7}$). Then, if a robot enters in position $(i,j)$ horizontally in the pattern of Figure~\ref{fig:patterntore}, it can pass vertically on all the columns $C_j$ to $C_{j+7}$ and can go out of the pattern horizontally on rows  $R_{i-1}$ and $R_{i+4}$, in any direction.
\end{lem}

\begin{center}
\begin{figure}[h]
\scalebox{0.4}{
\input{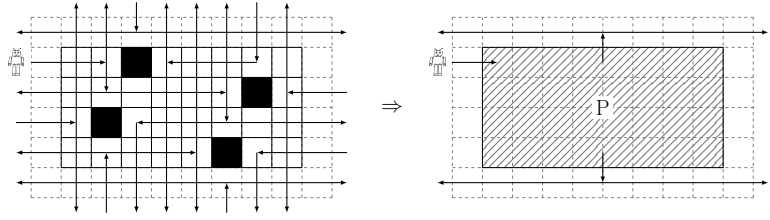}
}
\caption{Pattern for \IS{B}{P} on torus and its representation.}
\label{fig:patterntore}
\end{figure}
\end{center}

Using this pattern, if we know a solution for $T_{m,m}$ of size $\frac{m-1}{2}$ satisfying Conditions (1) and (2) of the proposition, we can get a solution for $T_{m+8,m+8}$ of size $\frac{m-1}{2}+4$ still satifsfying the conditions. Indeed, one can copy the solution of $T_{m,m}$ and if $i_m$ (resp. $j_m$) denotes the largest index of a row (resp. a column) containing a block, add the pattern in position $(i_m+1,j_m+1)$ (see Figure \ref{fig:inductiontore}).

\begin{center}
\begin{figure}
\scalebox{0.4}{
\input{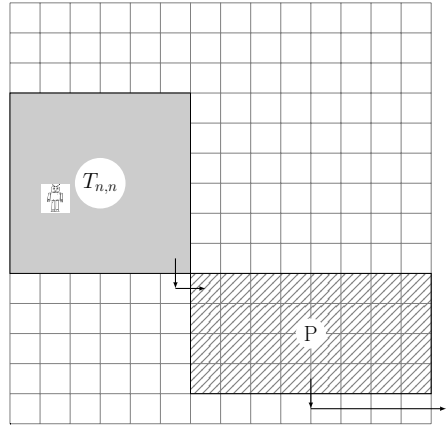}
}
\caption{Construction of a solution for $T_{m+8,m+8}$ from a solution of $T_{m,m}$. The conditions 1 and 2 of Proposition
\ref{prop:indtore} remain satisfied.}
\label{fig:inductiontore}
\end{figure}
\end{center}

Hence we just have to prove the proposition for $m=6$ to $m=13$. This is proved by Figure \ref{fig:basetore}. On this figure, we show the solutions for odd $m$. We get the solutions for even $m$ by removing the row and the column in gray.

\begin{figure}
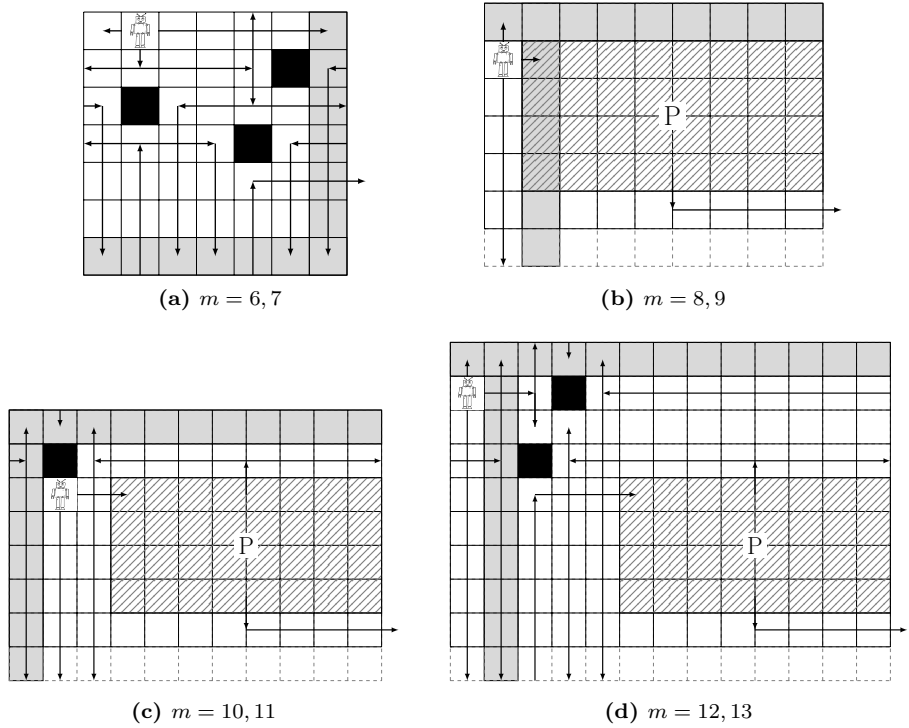

  \begin{center}
    \subfloat[$m=6,7$]{
      \scalebox{0.5}{
        \input{dessin_tore67}
      }
    }
    \hfil
    \subfloat[$m=8,9$]{
      \scalebox{0.5}{
        \input{dessin_tore89}
      }
    }
    \hfil
    \subfloat[$m=10,11$]{
      \scalebox{0.45}{
        \input{dessin_tore1011}
      }
    }
    \hfil
    \subfloat[$m=12,13$]{
      \scalebox{0.45}{
        \input{dessin_tore1213}
      }
    }
  \end{center}
  \caption{Small cases for induction for \IS{P}{B} on torus. Since all columns are passed vertically, we did not represent all the horizontal rows but just the significant ones. To get the even grids, remove the gray row and column.}
  \label{fig:basetore}
\end{figure}

\end{proof}

Since all the columns are passed vertically, we can add as many rows as we want to the torus and we get optimal solutions for $T_{n,m}$ with $n\geq m \geq 6$:

\begin{cor}
For $n\geq m\geq 6$, $\opt{P}{B}{T_{n,m}}=\frac{m-1}{2}$
\end{cor}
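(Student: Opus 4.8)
The plan is to combine the lower bound already established in this subsection with the construction of Proposition~\ref{prop:indtore}, the only genuinely new ingredient being the passage from the square torus $T_{m,m}$ to the rectangular torus $T_{n,m}$. Since the lower bound gives $\opt{P}{B}{T_{n,m}}\geq \frac{m-1}{2}$ for all $n\geq m$, it suffices to exhibit, for every $n\geq m\geq 6$, a valid set of blocks of size exactly $\frac{m-1}{2}$; matching the two bounds then yields the equality.

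First I would take the solution $B$ for $T_{m,m}$ furnished by Proposition~\ref{prop:indtore}. It uses $\frac{m-1}{2}$ blocks, it satisfies condition~(2), so that every column can be swept by vertical moves, and, by condition~(1) together with the maximality of $i_m$, the rows lying below the last block-row $R_{i_m}$ are free, giving a band of (at least two) \emph{globally} block-free consecutive rows. To obtain a solution for $T_{n,m}$ I would simply insert the $n-m$ extra rows as a single contiguous block-free band placed between two consecutive globally block-free rows of that region, keeping the same blocks. This leaves the cardinality equal to $\frac{m-1}{2}$ and keeps $(1,1)$ free, hence still a legal starting square.

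The core of the argument is then to check that this enlarged configuration is still a solution, and the key observation is that the insertion changes no stopping position. Indeed, placing the band between two rows that are block-free in \emph{every} column means that no block acquires a new immediate vertical neighbour; consequently a vertical move still halts just before exactly the same block, and a horizontal move in any original row still halts against exactly the same blocks, since the block pattern of each row is untouched. Thus the whole sequence of moves used on $T_{m,m}$ can be replayed verbatim on $T_{n,m}$: every turning point and every stop is preserved, while each vertical sweep now additionally traverses the inserted cells, which lie interior to the lengthened block-free arc of their column. Since condition~(2) guarantees that these sweeps already cover every cell of every column in $T_{m,m}$, the replayed strategy covers every cell of $T_{n,m}$, including those of the new rows, starting from $(1,1)$.

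I expect the main obstacle to be precisely the bookkeeping of this invariance, namely verifying that one may indeed insert all new rows within a common globally block-free band so that no stopping position relative to the blocks is disturbed; this is exactly the point where conditions~(1) and~(2) of Proposition~\ref{prop:indtore} are consumed (condition~(1) to locate the clean band below $R_{i_m}$, condition~(2) to ensure the lengthened sweeps still cover every column). Once this is checked the equality $\opt{P}{B}{T_{n,m}}=\frac{m-1}{2}$ follows immediately for all $n\geq m\geq 6$.
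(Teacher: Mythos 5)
Your proposal is correct and follows essentially the same route as the paper: the paper likewise combines the lower bound with the construction of Proposition~\ref{prop:indtore} and then observes that, since every column is passed vertically, arbitrarily many block-free rows can be appended to the torus without changing the number of blocks. Your extra bookkeeping (inserting the new rows inside a band bounded by two globally block-free rows so that all stopping positions are preserved) merely makes explicit the invariance that the paper leaves implicit in its one-line remark before the corollary.
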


We now complete the study with the small values of $m$:
\begin{prop}
We have:
\begin{itemize}
\item $\opt{P}{B}{T_{n,1}}=0$
\item $\opt{P}{B}{T_{n,2}}=n-2$
\item $\opt{P}{B}{T_{n,3}}=3$
\item $\opt{P}{B}{T_{n,4}}=3$
\item $\opt{P}{B}{T_{n,5}}=3$
\end{itemize}
\end{prop}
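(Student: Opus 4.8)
The plan is to establish each of the five values by an explicit block placement for the upper bound together with a dedicated counting argument for the lower bound, since the generic estimate $\opt{P}{B}{T_{n,m}}\geq\frac{m-1}{2}$ proved above is far too weak for $m\leq 5$. The unifying observation behind the constructions is that a column containing \emph{no} block is entirely visited by a single vertical loop launched from any one resting square in it, and a column containing exactly \emph{one} block is entirely visited by a single vertical sweep between the two squares flanking that block. Hence, once the robot can reach one resting square in (or next to) every column, a number of blocks \emph{independent of $n$} suffices. I also use repeatedly the same fact as in the general lower bound: the robot can come to rest in a column $C_c$ other than its starting column only by halting a horizontal slide, which requires a block in $C_{c-1}$ or $C_{c+1}$. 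For $m=1$ the single column carries no block and one vertical loop from $(1,1)$ visits it, so $\opt{P}{B}{T_{n,1}}=0$.

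For $m=2$ the decisive structural fact is that the robot can never rest outside its starting column: with only two columns there is no square \emph{beyond} the neighbouring one on which a horizontal slide could stop, so every horizontal move is either blocked at once or loops, and in neither case creates a new resting square. Thus all resting squares lie in $C_1$. For $C_1$ to be fully visited its free squares must form a single arc (otherwise the robot is confined to one arc and the free squares of the others are never reached), which forces its blocks to be contiguous and leaves at most three resting rows, namely the start row and the two rows flanking the block run. Each free square of $C_2$ is visited only by a horizontal loop issued from the resting square on its own row, so the free squares of $C_2$ occupy at most these three rows; writing $B=B_1\cup B_2$ this gives $|B|\geq |B_1|+(n-3)\geq n-2$ (the case $B_1=\emptyset$ yields the weaker $n-1$). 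The matching construction puts one block at $(3,1)$ and blocks on $C_2$ in every row except $1,2,4$: the robot sweeps $C_1$ vertically, reaches resting squares on rows $1,2,4$, and their horizontal loops cover the three free squares of $C_2$. The smallest values of $n$ are checked by hand.

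For $m\in\{3,4,5\}$ the upper bound is obtained by the single ``staircase'' $\{(2,1),(1,2),(3,3)\}$, which I claim works for every $n$ and \emph{all three} values of $m$. From $(1,1)$ a vertical move sweeps $C_1$ and stops at $(3,1)$; sliding east from $(3,1)$ stops at $(3,2)$, whence a vertical sweep covers $C_2$; sliding west from $(1,1)$ stops at $(1,3)$, whence a vertical sweep covers $C_3$ and also provides a rest at $(2,3)$. For $m=4$ (resp.\ $m=5$) the remaining block-free columns are reached by sliding into the existing blocks: sliding west from $(3,1)$ stops at $(3,4)$ and sliding east from $(2,3)$ stops at the last column before the wrap-around block $(2,1)$, giving one resting square in each block-free column, from which a vertical loop visits it entirely. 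One checks this coverage directly, exactly the bookkeeping of Lemma~\ref{lem:pattern_grid}; it gives $\opt{P}{B}{T_{n,m}}\leq 3$.

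It remains to prove the lower bound $\opt{P}{B}{T_{n,m}}\geq 3$ for $m\in\{3,4,5\}$, i.e.\ to rule out solutions with at most two blocks; this is the main obstacle. The cases of $0$ and $1$ block are immediate, and for $m=4,5$ the previous proposition already gives $\geq 2$, so the real work is to exclude exactly two blocks. The argument is a case analysis on how the two blocks are distributed among the columns, built on two necessary conditions: a column can host a resting square only if it is the starting column or has a block in a neighbouring column, and a column's free squares can be visited in full essentially only from a resting square inside it (covering them solely by horizontal passes from neighbours would require resting squares on every one of its rows, which two blocks cannot supply for large $n$). With only two blocks one shows that, whatever their positions up to the symmetries of the torus, some column is simultaneously unreachable as a host of resting squares and impossible to cover from its neighbours — in particular a block-column whose neighbours carry no block can never be entered, so its blocks are useless and its free squares are left unvisited. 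Making this enumeration exhaustive over the few essentially distinct two-block configurations is the crux of the proof; the small values of $n$ are again verified separately.
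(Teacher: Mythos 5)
Your proposal is correct in substance and follows the same skeleton as the paper's proof: trivial $m=1$; a resting-row count for $m=2$ with a matching $(n-2)$-block construction; and, for $m\in\{3,4,5\}$, an explicit three-block staircase (yours, $\{(2,1),(1,2),(3,3)\}$, is the paper's $\{(1,3),(2,4),(3,2)\}$ up to torus symmetry) together with an exclusion of two-block solutions. Where you genuinely diverge is in how that exclusion is organized. The paper normalizes coordinates: it first argues that $1\in\{i_1,j_1,i_2,j_2\}$ (otherwise the robot never rests outside row $R_1$ and column $C_1$), assumes w.l.o.g.\ $i_1=1$, derives $j_1\neq 1$, $j_1\neq j_2$, $i_2\neq 1$, and then asserts that one specific square (in effect $(i_2,j_1)$) is unreachable --- an argument uniform in $n$. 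You instead organize by columns: a column hosts a resting square only if it is the start column or has a block in an adjacent column, and a column hosting no resting square can only be swept horizontally, which requires a resting square in every one of its free rows --- impossible for two blocks, since the start plus two blocks supply only boundedly many resting rows. This buys a more mechanical finite enumeration (only the column distribution of the two blocks matters), at the price of an argument valid only for large $n$, with small $n$ deferred to hand-checking; the paper buys uniformity in $n$ at the price of a denser ``one can check''. Be aware that in both texts the decisive case check is left unexecuted --- you candidly call it ``the crux'' --- so neither is complete as written, though your two necessary conditions do make the enumeration finite and routine.

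Two smaller observations. First, your $m=2$ analysis is actually more careful than the paper's: you handle several blocks in $C_1$ via the contiguity-of-the-block-run argument and treat the $B_1=\emptyset$ case, where the paper implicitly considers a single block $(i,1)$. Second, your promised hand-check of small $n$ would surface a boundary issue that both proofs inherit: your $m=2$ construction needs rows $1,2,4$ distinct, just as the paper's count needs rows $1,i-1,i+1$ distinct, so both require $n\geq 4$; on $T_{2,2}$ and $T_{3,2}$ one checks that the optimum is in fact $n-1$ rather than $n-2$, so the stated formula (and your claim that small cases ``check out'') silently assumes $n$ is not too small. This does not affect the heart of your argument, but the hand-checked range should be stated honestly rather than asserted to agree with the formula.
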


\begin{proof}
The case $m=1$ is trivial.

If $m=2$, assume that the robot starts at position $(1,1)$. Then it can only go vertically on column 1, horizontally on row 1 and eventually horizontally on two more rows $i-1$ and $i+1$ if there is a block in position $(i,1)$.
Hence $n-3$ squares are not reachable and we need to add blocks on these squares.

For $m\in\{3,4,5\}$, assume there is a solution with starting position $(1,1)$ and two blocks in positions $(i_1,j_1)$ and $(i_2,j_2)$. We must have $1\in \{i_1,j_1,i_2,j_2\}$. Assume that $i_1=1$ (other cases are similar). We have $j_1\neq 1$, $j_1\neq j_2$ and $i_2\neq 1$ (otherwise the robot can not go everywhere). Then one can check that the robot can not go in position $(i_2,i_1)$, which leads to a contradiction.
Blocks in positions $\{(1,3),(2,4),(3,2)\}$ is a solution with three blocks.
\end{proof}

\begin{rem}
In the torus, all the positions are equivalent. Also, if we knows the starting position before placing the blocks, we can always use our constructions. However, if the solution must work for any starting position, then one need at least $\min(n,m)$ blocks. Indeed, if there exists a row $R_i$ and a column $C_j$ without any block, a robot starting in $(i,j)$ will never be able to leave the squares of $R_i\cup C_j$.
\end{rem}

\subsection{King grids}
We now consider the game \IS{B}{P} played on the King grid $n\times m$, denoted $\mathcal{K}_{n,m}$, also known as the strong product of two paths $P_n\boxtimes P_m$. In the King grid, the robot is also allowed to initiate moves diagonally.

\begin{theo}
Let $m\ge n$ be positive integers.

 If $\gcd(n,m)\le 3$, then $\opt{B}{P}{\mathcal{K}_{n+1,m+1}}=0$.
\end{theo}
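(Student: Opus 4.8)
The plan is to show that zero blocks suffice, i.e. to construct from the start square $(1,1)$ a legal sequence of moves whose traversed cells cover all of $\mathcal{K}_{n+1,m+1}$. The first step is a reduction. With no block on the board the only obstacles are the walls, so every move ends on the boundary and the cells at which the robot can \emph{stop} are exactly the boundary cells. However, from a stop $(r,1)$ on the left edge the robot sweeps the whole row $R_r$ by moving East, from $(r,m+1)$ by moving West, and symmetrically a stop on a horizontal edge lets it sweep a whole column. Hence it is \emph{sufficient} to exhibit a trajectory that stops, for every row $r$, at $(r,1)$ or $(r,m+1)$ (dually, at $(1,c)$ or $(n+1,c)$ for every column $c$). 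Rows $R_1$ and $R_{n+1}$ are free, since the robot sweeps $R_1$ at once and reaches $(n+1,1)$ by a single vertical move, so the real task is to reach a vertical-edge cell in each interior row.

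Next I would analyse the diagonal moves, which are the only ones changing the row of a stop on a vertical edge. As $m\ge n$, a diagonal move issued from a vertical edge always lands on a horizontal edge, and a diagonal move from a horizontal edge lands on a horizontal or vertical edge; bookkeeping these landings shows that the diagonal moves realize a $45^{\circ}$ billiard in the $(n+1)\times(m+1)$ box whose successive contacts with the edges are driven by the Euclidean algorithm on $(n,m)$. The structural fact to extract is that the set of interior rows attainable on a vertical edge is controlled by $\gcd(n,m)$, which is exactly why a threshold on the gcd appears.

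The engine of the proof is then an induction exploiting the invariance $\gcd(n,m)=\gcd(n,m-n)$. Assuming $m>n$, I would prove a reduction lemma: a covering trajectory for $\mathcal{K}_{n+1,(m-n)+1}$ unfolds into one for $\mathcal{K}_{n+1,m+1}$ by reflecting the board across the vertical line bounding its last $n$ columns, mirroring each move that would hit the old right wall so that it instead runs into the reflected strip and returns. Since the left edge is untouched, the left-edge stops of the small trajectory survive, and in the larger board their row sweeps now cover the full width; the reflected strip is swept by the mirrored moves. One must check that each mirrored move is still a maximal straight segment ending exactly at a wall, that the junction on the reflection line is legal, and that the strip is covered completely. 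This verification, where the billiard geometry and the stopping rule interact, is the step I expect to be the main obstacle. A symmetric lemma reflecting across a horizontal line handles $n>m$. Because each step replaces $(n,m)$ by $(n,m-n)$ (or swaps the two coordinates) without altering the gcd, the subtractive Euclidean algorithm drives $(n,m)$ down to $(d,d)$ with $d=\gcd(n,m)\in\{1,2,3\}$, the hypothesis $\gcd(n,m)\le 3$ being preserved throughout.

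It finally remains to settle the base cases $\mathcal{K}_{2,2}$, $\mathcal{K}_{3,3}$ and $\mathcal{K}_{4,4}$ (for $d=1,2,3$) by exhibiting explicit covering trajectories; the case $d=1$ is immediate since then both rows are boundary rows, and for $d=2,3$ a short finite check suffices. As a consistency check, note that the same subtractive algorithm applied to a pair with $\gcd(n,m)\ge 4$ terminates at $\mathcal{K}_{d+1,d+1}$ with $d\ge 4$, a grid for which this covering scheme breaks down, which is coherent with the gcd hypothesis being the right threshold.
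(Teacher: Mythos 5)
Your reduction in the first paragraph is where the proof breaks, and it cannot be repaired. It is true that a stop on a vertical edge lets the robot sweep that row, but the goal you reduce to --- stopping at $(r,1)$ or $(r,m+1)$ for every interior row $r$ --- is unattainable whenever $d=\gcd(n,m)\in\{2,3\}$. Indeed, with no blocks there is an invariant you have missed: every square on which the robot can stop has \emph{both} coordinates congruent to $1$ modulo $d$. This follows by induction on the move sequence: horizontal and vertical moves end at coordinate $1$, $m+1\equiv 1$, or $n+1\equiv 1\pmod d$; diagonal moves preserve $r-c$ or $r+c$ modulo $d$ and terminate on an edge, so both congruences propagate to the new stop. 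Hence for $d=2$ the robot never stops in an even row or even column at all, and a cell such as $(2x,2y)$ can only be \emph{passed over} in the middle of a diagonal slide --- it can never be covered by a row or column sweep. Even your own base case $\mathcal{K}_{3,3}$ already violates your criterion: the robot can never stop at $(2,1)$ or $(2,3)$, yet the board is coverable precisely because $(2,2)$ is crossed diagonally. This is exactly the structure the paper's proof exploits: it first shows the robot stops at all top-row positions $(1,1+\alpha n-\beta m)$, hence by B\'ezout at all $(1,1+\alpha d)$, and then covers the remaining residue classes for $d=2,3$ by explicit mid-diagonal passes rather than by sweeps from stops.

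Your inductive engine is also structurally unsound, independently of the above. Unfolding by reflection models a billiard that \emph{bounces} off walls, but the robot does not bounce: it stops. A horizontal move that terminates on the right wall of $\mathcal{K}_{n+1,(m-n)+1}$ does not, inside $\mathcal{K}_{n+1,m+1}$, ``run into the reflected strip and return''; it slides straight through the fold line (where there is no wall, and you have zero blocks to place one) and stops at a different column, so the stop set of the small trajectory is not transported to the large board and the induction hypothesis cannot be applied. The paper needs no induction on boards: it realizes the subtractive Euclidean algorithm \emph{inside the fixed board}, alternating diagonal slides with slides along the edges, which is what produces the family of stops $(1,1+\alpha n-\beta m)$ with $0\le\alpha n-\beta m\le m$ in a single grid. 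If you want to salvage your write-up, replace the row-sweeping reduction by the weaker and correct goal ``every cell is either in a row/column swept from a stop, or lies on a diagonal segment between two reachable stops,'' and replace the board induction by the in-board Euclidean/B\'ezout argument.
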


\begin{proof}
Let $m\ge n$ be positive integers such that $\gcd(n,m)\le 3$, and consider the game on the King grid ${\mathcal{K}_{n+1,m+1}}$. We first observe that the robot can stop on all positions $(1,1+ \alpha n - \beta m)$ where  $0\le \alpha n - \beta m \le m$ (see Figure~\ref{fig:kinga}). From $(1,1)$, the robot can go diagonally to $(1+n,1+n)$ then up to $(1,n+1)$. Iterating the process, it reaches $(1, 1+ \alpha n)$ for all $0\le \alpha \le \frac{m}{n}$. Eventually, the robot reaches the position $(1,1+\lfloor\frac{m}{n}\rfloor n)$. Then, going diagonally, it reaches $(1+m-\lfloor\frac{m}{n}\rfloor n, m+1)$, then can slide to $(1+m-\lfloor\frac{m}{n}\rfloor n, 1)$, and can continue diagonally its movement to $(1+n,1+(\lfloor\frac{m}{n}\rfloor+1) n -m)$ from which it can go up to  $(1,1+(\lfloor\frac{m}{n}\rfloor+1) n -m)$. Iterating the same process, we get that it reaches all positions $(1,1+ \alpha n - \beta m)$ where  $0\le \alpha n - \beta m \le m$. Then, thanks to Bézout's identity, we deduce that the robot reaches all positions $(1,1+\alpha \gcd(m,n))$ where $0\le \alpha \le \frac{m}{\gcd(m,n)}$.

Now, if $\gcd(m,n)=1$, from these positions, the robot can pass vertically any position. If $\gcd(m,n)=2$, then the robot passes vertically all positions $(x, 1+2y)$. From $(1, 1+2y)$, it can also reach by a diagonal $(1+2y,1)$, and thus can pass horizontally all $(1+2x, y)$. Positions $(2x,2y)$ can be reached from either $(1+2(x-y), 1)$ (if $x\ge y$) or from $(1,1+2(y-x)$) (otherwise). So the robot can pass all positions. 
Finally, if $\gcd(m,n)=3$, the robot can pass vertically all $(x,1+3y)$ and horizontally all $(1+3x,y)$. The positions $(2+3x, 2+3y)$ and $(3x, 3y)$ can be reached diagonally from $(1+3(x-y),1)$ or from $(1,1+3(y-x))$. The positions $(3x,2+3y)$ and $(3x-1,3y)$ can be reached diagonally from $(n+1,3(y+x)-n+1)$ or from $(3(y+x)+1,1)$.
\end{proof}

\begin{figure}
  \begin{center}
    \subfloat[On $\mathcal{K}_{n+1,m+1}$ when $\gcd(n,m)=3$, the robot can stop on grey squares.]{
      \scalebox{0.55}{
        \input{king-ex1}
      }\label{fig:kinga}
    }
    \hfil
    \subfloat[How we use blocks]{
      \scalebox{0.45}{
        \input{king-block}
      } \label{fig:kingb}
    }
    \caption{Block pass in the king grid.}
\label{fig:king}
\end{center}
\end{figure}

\begin{theo}
For $m\ge n$ positive integers, we have $$\opt{B}{P}{\mathcal{K}_{n+1,m+1}}\le \left\lceil\frac{\gcd(n,m)-3}{8}\right\rceil\,.$$
\end{theo}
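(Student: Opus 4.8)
The plan is as follows. Write $d=\gcd(n,m)$ and $k=\left\lceil\frac{d-3}{8}\right\rceil$. When $d\le 3$ we have $k=0$ and the statement is exactly the previous theorem, so I may assume $d\ge 4$. Since the claim is an \emph{upper} bound, it suffices to exhibit a placement of $k$ blocks together with, for every square, a sequence of moves passing over it; no optimality argument is needed. I would prove the bound by induction on $d$ with step $8$: the claim to establish is that one well-placed block lets the robot behave on $\mathcal{K}_{n+1,m+1}$ exactly as it would on a king grid whose gcd is reduced by $8$, so that $k$ blocks bring the effective gcd down to $\le 3$, at which point the block-free argument of the previous theorem finishes the job.

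To make the inductive step precise I would first isolate the obstruction that a block must overcome. Recall from the previous theorem that, with no blocks, the diagonal billiard launched from $(1,1)$ reaches and passes vertically exactly the columns $C_j$ with $j\equiv 1\pmod d$: a diagonal move preserves $i-j$, and every reflection off a boundary wall occurs at a row or column index $\equiv 1\pmod d$ because $n\equiv m\equiv 0\pmod d$, so the residue class modulo $d$ of the reachable columns is an invariant of pure billiard play. This is precisely why, in the previous theorem, the other residues had to be recovered by an explicit case analysis that only succeeds for $d\le 3$. The purpose of a block is to break this invariant: a single interior block provides one extra reflecting cell off which the robot can bounce and thereby jump onto a trajectory carrying a shifted residue.

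The heart of the proof is therefore the \emph{block gadget} sketched in Figure~\ref{fig:kingb}. I would show, by exhibiting explicit routes exactly as Figure~\ref{fig:kinga} does for the unperturbed case, that one block placed at a suitable position relative to the reachable lattice lets the robot reach and pass vertically all columns lying in a window of $8$ new consecutive residue classes modulo $d$, while leaving untouched the rows and columns used by the base billiard. Stacking $k$ such gadgets (say along a diagonal) then widens the set of reachable residues to a window of width $3+8k\ge d$, hence to all of $\{0,\dots,d-1\}$ after wrap-around; once every residue class of columns can be passed vertically, the diagonal case-analysis of the previous theorem (its arguments for $\gcd(m,n)\in\{1,2,3\}$) applies inside each window to reach every remaining square, not merely those of row $1$.

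The main obstacle I expect is verifying that the gadget genuinely buys a full window of $8$ residues and that gadgets compose: one must check that the bounces off the block land on the intended new diagonals, that the robot can still enter and leave each gadget on rows keeping the unperturbed billiard functional, and that successive gadgets do not share a row or column in a way that deflects the robot prematurely. A few boundary cases (small $d$, or grids too small to host a full $8$-wide gadget) will likely need separate treatment, as in the companion constructions for rectangular grids and tori, and the parity/diagonal bookkeeping of the previous theorem must be re-run to confirm that filling in all residues really does cover the whole king grid.
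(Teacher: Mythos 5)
Your plan stands or falls on the block gadget, and that is exactly where it fails. You ask each block to make \emph{eight new consecutive residue classes} of columns vertically passable, so that $k$ blocks yield a window of width $3+8k\ge d$ and every column can be traversed. But a single block only creates $O(1)$ new stopping cells (the at most eight squares adjacent to it), and in the paper's actual construction each block $B_i$, placed alternately at $(n+1,n+1-2i)$ and $(1,n+1-2i)$, buys exactly \emph{two} new offsets: from $(1,j+1)$ the robot slides to $(j+1,1)$, then diagonally to $(n,n-j)$ where $B_{j/2}$ stops it, giving the offsets $j+1$ and $j+2$ on the border. Thus $k$ blocks yield border positions $(1,1+j)$ and $(n+1,1+j)$ only for $0\le j\le 2k$, i.e.\ vertical passes confined to a band of width roughly $4k+1$ around the residues $1\pmod{\gcd(n,m)}$ --- never all residues, and nowhere near $8$ per block. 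The missing factor of four in the paper comes not from a stronger gadget but from exploiting \emph{both} diagonal directions: a cell $(1+\alpha d+x,\,1+\beta d+y)$ with $d=\gcd(n,m)$ is swept by a slide of difference type when $|x-y|\le 2k$, and otherwise a short computation (using $2k+1\le x,y\le d-2k-1$ and $d\le 8k+3$) shows $|x+y-d|\le 2k$, so it is swept by an anti-diagonal slide launched from a reachable border offset; see Figure~\ref{fig:king-20-20}. In particular the paper never makes every column vertically passable and never reruns the $\gcd\le 3$ case analysis; the leftover cells are merely \emph{passed over} diagonally, which suffices since the game is the P-variant.

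Two further problems with the proposal as written. First, the induction ``reduce the gcd by $8$'' is not well founded: adding a block does not change $n$, $m$, or the billiard reflections, so there is no smaller king grid to which the inductive hypothesis could apply; the argument must be run on the fixed grid as a statement about the growing set of reachable offsets, which is what the paper's induction on $j\le 2k$ does. Second, even granting your gadget, the closing appeal to the $\gcd\in\{1,2,3\}$ analysis is superfluous --- if every column residue were vertically passable, every square would already be covered by vertical slides. To repair the proof you would have to either establish the eight-residues-per-block lemma (which the mechanics above make very doubtful), or replace the ``consecutive vertical residues'' bookkeeping by the sum-and-difference window argument, which is the paper's route.
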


\begin{proof}
Let $k=\left\lceil\frac{\gcd(n,m)-3}{8}\right\rceil$, we prove now that using $k$ blocks, the robot may pass on every unoccupied position. For $0\le i<k$, we place a block $B_i$ on position $(n+1, n+1-2i)$ when $i$ is even, and $(1,n+1-2i)$ when $i$ is odd (see Figure~\ref{fig:king-20-20}). 

We first prove by induction that the robot can reach position $(1,n+1-j)$ or position $(n+1,n+1-j)$ for all $0\le j\le 2k$, and thus, with two diagonal moves and possibly one vertical move, the robot also reaches positions $(1,1+j)$ and $(n+1,1+j)$. Let $j$ be a positive even integer in the range $0\le j\le 2k-2$, and assume the robot can reach the positions $(1,1+j)$ and $(n+1, 1+j)$. The base case, when $j=0$, is obvious. If $j$ is even, from $(1,j+1)$, the robot can slide to $(j+1,1)$ then to $(n,n-j)$ where it get stopped by block $B_{\frac{j}{2}}$ (see Figure~\ref{fig:kingb}). Then it can go up to $(1,n-j)$ and diagonally to $(n+1,n-j-1)$, proving the property for $j+1$ and $j+2$. 

From these positions, with similar arguments as in the previous lemma, we get that the robot can always reach positions of type $(1,1+\alpha\gcd(n,m)-j)$ or $(n+1,1+\alpha\gcd(n,m)-j)$ as well as  $(1+\alpha\gcd(n,m)+j,1)$ or $(1+\alpha\gcd(n,m)+j,n+1)$ with $0\le j\le 2k$ (see Figure~\ref{fig:king-20-20}. Then, the robots can reach all positions whose line or column can be written in such a way. So consider now a position of type $(1+\alpha\gcd(n,m)+x,1+\beta\gcd(n,m)+y)$ where $x$ and $y$ are larger than $2k$ but less than $\gcd(n,m)-2k\le  6k+3$. If $|x-y| \le 2k$, then the robot can reach that position by sliding diagonally from $(1+(\alpha-\beta)\gcd(n,m)+(x-y),1)$ or $(1,1+(\alpha-\beta)\gcd(n,m)+(x-y))$. If $|x-y|\ge 2k+1$, we deduce that 
\begin{align*}
x+y-gcd(n,m) & \le 2(gcd(n,m)-2k-1)-(2k+1)-gcd(n,m)\le 2k \\
x+y-gcd(n,m) & \ge 2(2k+1)+2k+1 -gcd(n,m)\ge -2k
\end{align*}
Therefore, if  $y'=1+\alpha\gcd(n,m)+x + n+1-(1+\beta\gcd(n,m)+y)$ is non negative, then it can be writen in the form $1+\gamma\gcd(n,m)+j$ or $1+\gamma\gcd(n,m)-j$ where $0\le j\le 2k$, and the position can be reached diagonally from $(n+1,x')$, otherwise $x' = n+1 - y'$ and then the position can be reached from $(x',1)$.

\end{proof}

\begin{figure}
  \begin{center}
  \scalebox{0.5}{
       \input{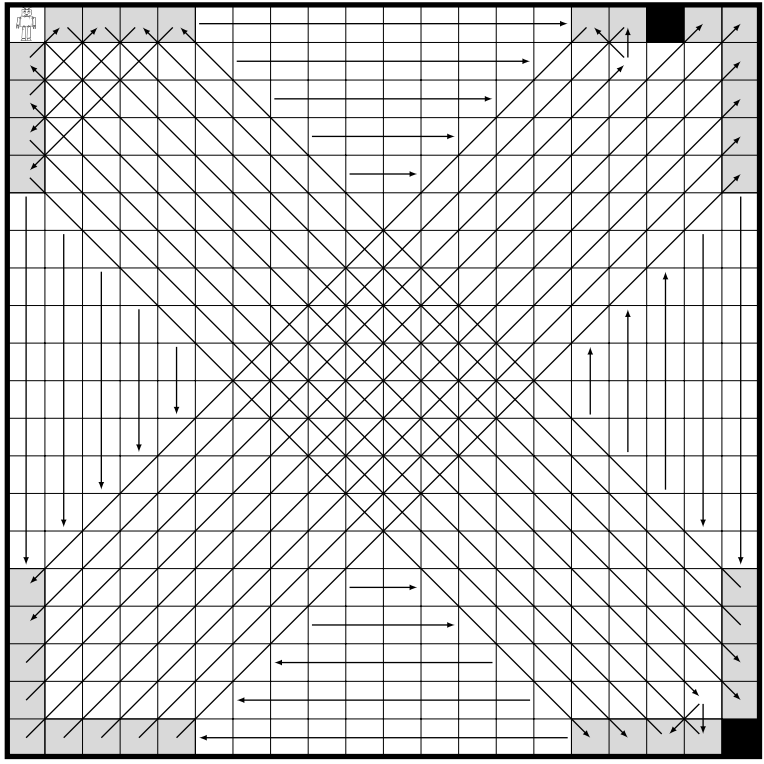}}
  \caption{Two blocks for the King grid $\mathcal{K}_{20,20}$.}
\label{fig:king-20-20}
\end{center}
\end{figure}

When $\gcd(m,n)>3$, it is not difficult to verify that there is no solution without any block, by checking all the possible moves. We can observe that there is no possibility to reach the position $(2,3)$ for example. However, we do not have any general proof that our bound is optimal, and we leave it as an open question.

\section{Other instances of \IS{}{}}\label{sec:variant}

\subsection{\IS{B}{S}}\label{sec:isbs}
In what follows, we consider the game \IS{}{} played with blocks on a grid where the robot has to stop on each square. An inner square of the grid is a square $(i,j)$ such that $2\leq i \leq n-1$ and $2\leq j\leq m-1$. Other squares are called border squares.

We obtain the following lower bound for $\opt{B}{S}{G_{n,m}}$:

\begin{prop}
For each $n\geq m \geq 3$, we have $\opt{B}{S}{G_{n,m}}\geq \gamma(G_{n-2,m-2}).$
\end{prop}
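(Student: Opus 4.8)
The plan is to turn any valid block placement for \IS{B}{S} on $G_{n,m}$ into a dominating set of the inner subgrid $G_{n-2,m-2}$ of no larger size. I identify the inner squares $(i,j)$ with $2\le i\le n-1$, $2\le j\le m-1$ with the vertex set $V'$ of $G_{n-2,m-2}$, viewed as an induced subgrid of $G_{n,m}$; since $m\ge 3$ this set is nonempty. Let $B$ be an optimal solution, so $|B|=\opt{B}{S}{G_{n,m}}$. First I would record the stopping constraint: if an inner square $(i,j)$ carries no block, then, lying on no boundary, it can only be reached by the robot hitting a block, so one of its four neighbours lies in $B$. Together with the case $(i,j)\in B$, this shows every inner square belongs to $N_{G_{n,m}}[B]$, that is, $B$ dominates $V'$ inside $G_{n,m}$.

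Next I would define an inward projection $\phi\colon B\to V'$: an inner block is kept in place, a non-corner edge block is sent to its unique inner neighbour (so $(1,j)\mapsto(2,j)$, $(n,j)\mapsto(n-1,j)$, $(i,1)\mapsto(i,2)$, and $(i,m)\mapsto(i,m-1)$), and each of the four corner blocks, which dominates no inner square, is sent to an arbitrary inner vertex. The crucial claim to verify is that $\phi$ preserves domination of $V'$: whenever $b\in B$ dominates an inner square $v$, one has $v\in N_{G_{n-2,m-2}}[\phi(b)]$. For an edge block this holds because the only inner square it dominates is precisely $\phi(b)$; for an inner block $b$ every inner square it dominates is an inner neighbour of $b$, and two inner squares adjacent in $G_{n,m}$ stay adjacent in the induced subgrid, so $\phi(b)=b$ still dominates $v$.

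Combining the two steps finishes the argument. Every inner square is dominated by some $b\in B$, hence by $\phi(b)$ in $G_{n-2,m-2}$, so $D=\phi(B)$ is a dominating set of $G_{n-2,m-2}$. Being the image of $B$ under a map, $|D|\le|B|$, whence $\gamma(G_{n-2,m-2})\le|D|\le|B|=\opt{B}{S}{G_{n,m}}$, which is the desired inequality.

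The delicate point, which I would write out with care, is the treatment of blocks on the boundary of $G_{n,m}$: one must check that each non-corner edge block dominates exactly one inner square and that projecting it onto that square preserves domination, while corner blocks contribute nothing to dominating $V'$ and can be absorbed harmlessly by the map. The interior case and the final counting are routine once this boundary behaviour is settled.
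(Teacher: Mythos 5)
Your proof is correct and follows essentially the same route as the paper's: both arguments keep the inner blocks in place and replace each boundary block by the (unique) inner square it dominates, yielding a dominating set of $G_{n-2,m-2}$ of size at most $|B|$. Your explicit projection $\phi$ is just a cleaner packaging of the paper's construction $B'=B_1\cup V_b$, with the added (harmless) care about corner blocks that the paper leaves implicit.
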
 

\begin{proof}
Consider an optimal solution $B$ of \IS{B}{S} on $G_{n,m}$.
Let $(i,j)$ be an inner square of the grid.
There must a block adjacent to $(i,j)$ otherwise the robot cannot stop on it. 
Let $G'$ be the grid induced by the inner squares. It is isomorphic to $G_{n-2,m-2}$.
Let $B_1$ be the set of blocks of $B$ that are located on inner squares. 
Let $V_b$ be the set of squares of $G'$ that are not adjacent to a block of $B_1$. Note that all these squares are on the border of $G'$, and that, for each square $(i,j)$ of $V_b$, there is a block on an adjacent square of $(i,j)$ which is on the border of $G_{n,m}$. Thus we have $|B| \geq |B_1|+|V_b|$. Now let us consider $B'$ defined as the disjoint union of $B_1$ and $V_b$. Clearly, $B'$ is a dominating set of $G_{n-2,m-2}$, and we have $|B'|\leq |B|$. Hence we have:
$$\opt{B}{S}{G_{n,m}}=|B|\geq |B'|\geq \gamma(G_{n-2,m-2}).$$
\end{proof}

We now give a construction of a solution of \IS{B}{S} on $G_{n,m}$.

\begin{prop}\label{prop:cons-BS}
For any $n\geq m \geq 3$, we have  $$\opt{B}{S}{G_{n,m}}\leq \gamma(G_{n-2,m-2}) +\frac{18}{25}(m+n)+\frac{28}{5}.$$ 
\end{prop}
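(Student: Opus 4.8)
The plan is to exhibit an explicit block configuration whose size matches the claimed bound, and to verify it solves the game. The key reformulation is that, since the robot occupies a cell precisely when a slide terminates there, a set $B$ solves \IS{B}{S} on $G_{n,m}$ if and only if every non-block cell lies in the reachability closure of $(1,1)$ under sliding moves. For an \emph{inner} cell this forces a block on one of its four neighbours, i.e.\ domination of the inner grid; a \emph{border} cell may instead terminate a slide against the boundary. So two conditions must hold at once: a \emph{stopping} condition (each cell is the endpoint of some slide) and a \emph{reachability} condition (that slide can be launched from an already reachable cell). The stopping condition is essentially domination, and is what produces the main term $\gamma(G_{n-2,m-2})$; the reachability condition is the real content of the upper bound.

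I would build $B$ as a \emph{core} plus a \emph{boundary scaffold}. For the core, take the period-$5$ diagonal pattern $\{(i,j)\colon 2i+j\equiv c \pmod 5\}$ restricted to the inner grid $G_{n-2,m-2}$. This is a near-minimum dominating set, so it contributes $\gamma(G_{n-2,m-2})$ blocks up to an additive $O(m+n)$ coming from truncation near the border, which we fold into the error term. Being a perfect code of the infinite grid, it has the two navigational features that matter: every interior cell is adjacent to exactly one core block, and in each row and each column the core blocks are equally spaced (with gap $5$). This regularity is what lets the robot move around the interior.

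The reachability proof then splits into two layers. For \emph{interior navigation}, I would argue by an induction, seeded at the scaffold and propagated along the rows and columns of the code, that every cell adjacent to a core block is reachable: such a cell $c$ can be entered by sliding toward its core block $b$, launching from the neighbour of $c$ opposite $b$, which is itself adjacent to a \emph{different} core block and reachable by the inductive hypothesis (one checks that this slide indeed stops at $c$, since $b$ is the first core block met). For \emph{boundary handling}, note that the diagonal pattern is truncated along rows $1,2,n-1,n$ and columns $1,2,m-1,m$, so there the stopping and launching recipes break down; here I insert the scaffold, a bounded periodic family of blocks lying within distance $O(1)$ of the border, chosen so that (a) every border cell and every inner cell in the first two rows/columns is dominated, and (b) the robot can travel along each side and ``dive'' onto the interior code, in particular can leave the start $(1,1)$ and reach the spine of the pattern. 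Counting the scaffold side by side, with a separate correction at the four corners, yields at most $\tfrac{18}{25}(m+n)+\tfrac{28}{5}$ extra blocks; the finitely many small cases $m<$ (some threshold) are checked directly against the generous constants.

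The main obstacle is the reachability/navigation argument, not the counting: domination only guarantees that each cell \emph{could} be the end of a slide, and one must certify that the launching cell of that slide is itself reachable, i.e.\ that the sliding-closure of $(1,1)$ is all of $G_{n,m}$. The delicate point is the interface between the interior diagonal code and the boundary scaffold, where the two periodic structures must be phased so that no cell is stranded and so that the robot can pass freely between border travel and interior descent. Verifying this is exactly where the constants $\tfrac{18}{25}$ (the scaffold density per unit of perimeter) and $\tfrac{28}{5}$ (the corner correction) get pinned down, and it is the step I expect to require the most care; as in the earlier constructions, it is most cleanly discharged by displaying the boundary pattern together with explicit robot routes in a figure.
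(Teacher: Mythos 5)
Your proposal takes essentially the same route as the paper's proof: the paper's core $B_1$ is exactly your period-$5$ diagonal code (translates of the block $(1,2)$ by the lattice generated by $(1,2)$ and $(3,1)$), its border set $B_2$ together with at most two corner blocks is your boundary scaffold, and its counting argument (showing $\gamma(G_{n-2,m-2})\geq \frac{1}{5}|B_1'|+|B_1''|$ for the truncated code and bounding the scaffold by $\frac{2}{5}(m+n)+2$ per side-family) produces the same constants $\frac{18}{25}$ and $\frac{28}{5}$ that you anticipate. The only cosmetic difference is the interior-navigation lemma — the paper certifies reachability via the $4\times 4$ room pattern of Figure~\ref{fig:rectBS} rather than your per-cell one-step slide induction — and both treatments discharge the boundary phasing exactly as you propose, by exhibiting the border blocks and explicit robot routes in a figure.
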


\begin{proof}
Let $n\geq m\geq 2$.

We construct a solution $B$ of \IS{B}{S} in three steps. The construction is illustrated on Figures \ref{fig:consBS} and \ref{fig:rectBS}.

Let first $B_1$ be the set of blocks that are the translation of the block located on square $(1,2)$ by linear combinations of the two vectors $\{(1,2), (3,1)\}$ (see Figure~\ref{fig:consBS}). Note that $B_1$ covers all the inner squares of $G_{n,m}$ and that each inner square is covered exactly once.

Let us call $B_1'$ the subset of blocks which are located on the border. By construction, there are at most $2\left(\left\lceil \frac{n-2}{5} \right \rceil+\left\lceil \frac{m-2}{5} \right\rceil\right) \leq \frac{2}{5}(m+n)+2$ such blocks. Clearly, the blocks of $B_1'$ cover a subset $S'$ of the inner squares such that $|B_1'|=|S'|$.

Let us denote $S''$ the set of inner squares which are not in $S'$. Since each inner square is covered exactly once, then the subset $B_1''$ of blocks of $B_1$ that are located on inner squares satisfies $5|B_1''|=|S''|$.

On the other hand, since each square of the grid has at most four neighbours, then we have $\gamma(G_{n-2,m-2}) \geq \frac{1}{5}(|S'|+|S''|)$. Hence we have $\gamma(G_{n-2,m-2}) \geq \frac{1}{5}|B_1'| + |B_1''|$, and since $B_1$ is the disjoint union of $B_1'$ and $B_1''$ then we have $|B_1| \leq \gamma(G_{n-2,m-2}) +\frac{8}{25}(m+n)+\frac{8}{5}$.

The blocks of $B_1$ will be enough to ensure that the robot will be able to stop on almost each inner square as soon as it may reach some of them. Indeed, if the robot enters in a rectangle of size $4\times 4$ that has four blocks on its border (see as an example the gray rectangle on Figure \ref{fig:consBS}), then it will be able to stop in the four inner squares of this rectangle and thus to go out of the rectangle from these four places (see Figure \ref{fig:rectBS}). To ensure that the robot will be able to enter in those rectangles from the border of the grid and to stop on the border, we add some blocks that will form the set $B_2$. 

\begin{center}
\begin{figure}[h]
\resizebox{5cm}{!}{
\input{dessin-BS-preuve1}
}
\caption{Solution of \IS{B}{S} on $G_{8,10}$}
\label{fig:consBS}
\end{figure}
\end{center}

\begin{center}
\begin{figure}[h]
\resizebox{3cm}{!}{
\input{BS-rectangle}
}
\caption{Pattern of the solution of \IS{B}{S}}
\label{fig:rectBS}
\end{figure}
\end{center}

For each block of $B_1$ located on an inner square which is neighbour of a border square, we add a block in $B_2$ in the following way:

\begin{itemize}
\item for each block $(2,j)\in B_1$, we add the block $(1,j)$ to $B_2$;
\item for each block $(n-1,j)\in B_1$, we add the block $(n,j-1)$ to $B_2$;
\item for each block $(i,2)\in B_1$, we add the block $(i,1)$ to $B_2$;
\item for each block $(i,m-1)\in B_1$, we add the block $(i,m)$ to $B_2$.
\end{itemize}

By construction, we have $|B_2|\leq 2\left(\left\lceil \frac{n-2}{5} \right \rceil+\left\lceil \frac{m-2}{5} \right\rceil\right) \leq \frac{2}{5}(m+n)+2$.

Finally, we add blocks in the corners of $G_{n,m}$ if these squares are not reachable by the robot:

\begin{itemize}
\item if there is a block on $(2,m-1)$, then we add the block $(1,m)$;
\item if there is a block on $(n-1,2)$, then we add the block $(n,1)$;
\item if there is a block on $(n-1,m-1)$, then we add the block $(n,m)$.
\end{itemize}

These blocks form the set $B_3$. Note that at most two blocks are added in $B_3$, since $(1,m)$ is added if and only if $m \equiv 0\bmod 5$, and $(n,1)$ is added if and only if $n \equiv 2\bmod 5$.

The set of blocks $B=B_1 \cup B_2 \cup B_3$ form a solution of \IS{B}{S}, of cardinality at most $\gamma(G_{n-2,m-2}) +\frac{18}{25}(m+n)+\frac{28}{5}$.

\end{proof}

By combining the lower and uppers bounds and since $\gamma(G_{n,m})=\left\lfloor \frac{(n+2)(m+2)}{5}\right \rfloor -4$ \cite{DomGrid}, we obtain an asymptotic value for $\opt{B}{S}{G_{n,m}}$:

\begin{cor}
If $m$ and $n$ are large enough, $\opt{B}{S}{G_{n,m}}=\frac{mn}{5}+O(m+n)$.
\end{cor}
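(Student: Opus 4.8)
The plan is to read off the asymptotics by sandwiching $\opt{B}{S}{G_{n,m}}$ between the lower bound established just above (namely $\opt{B}{S}{G_{n,m}}\geq\gamma(G_{n-2,m-2})$ for $n\geq m\geq 3$) and the upper bound of Proposition~\ref{prop:cons-BS}, after substituting the known closed form for the grid domination number. First I would instantiate $\gamma(G_{n,m})=\lfloor\frac{(n+2)(m+2)}{5}\rfloor-4$ at $G_{n-2,m-2}$, obtaining
\[
\gamma(G_{n-2,m-2})=\left\lfloor\frac{nm}{5}\right\rfloor-4=\frac{nm}{5}+O(1),
\]
where the $O(1)$ absorbs both the floor discrepancy (which lies in $[0,1)$) and the additive constant.

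Next I would substitute this into the two inequalities. The lower bound becomes $\opt{B}{S}{G_{n,m}}\geq\frac{nm}{5}+O(1)$, while Proposition~\ref{prop:cons-BS} gives
\[
\opt{B}{S}{G_{n,m}}\leq\gamma(G_{n-2,m-2})+\frac{18}{25}(m+n)+\frac{28}{5}=\frac{nm}{5}+O(m+n),
\]
since the linear term $\frac{18}{25}(m+n)$ swallows the $O(1)$ contributions. Together these place $\opt{B}{S}{G_{n,m}}$ between $\frac{nm}{5}+O(1)$ and $\frac{nm}{5}+O(m+n)$, so $\opt{B}{S}{G_{n,m}}=\frac{nm}{5}+O(m+n)$, as claimed.

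The hypothesis that $m$ and $n$ be large enough is needed only to guarantee that the closed form for $\gamma(G_{n,m})$ quoted from \cite{DomGrid} applies: that formula is valid for all but a finite set of small grids, and once $n\geq m$ exceed the corresponding threshold (in particular $m\geq 3$, so that both bounding propositions hold) the substitution above is legitimate. I do not expect any genuine obstacle here — the substantive work lies in the matching lower and upper bounds already proved, and this corollary merely extracts their common leading term.
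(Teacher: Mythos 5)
Your proposal is correct and follows exactly the paper's (implicit) argument: the corollary is obtained by sandwiching $\opt{B}{S}{G_{n,m}}$ between the lower bound $\gamma(G_{n-2,m-2})$ and the upper bound of Proposition~\ref{prop:cons-BS}, then substituting $\gamma(G_{n-2,m-2})=\left\lfloor\frac{nm}{5}\right\rfloor-4$ from the cited formula of Gon\c{c}alves, Pinlou, Rao and Thomass\'e. Your added remark that ``large enough'' is needed precisely so that the closed form for the grid domination number applies is a correct and welcome clarification of a point the paper leaves tacit.
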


Notice that the construction of Proposition~\ref{prop:cons-BS} is optimal for $n=m=2$, $n=m=3$, and $m=n=4$. We did not try to find the exact value of $\opt{B}{S}{G_{n,m}}$ in the general case since we think that such a result does not deserve much interest compared to the tedious case study it would need to be proved. 

\begin{rem}
Remark that, in the torus, using an optimal dominating set we immediately get $\opt{B}{S}{T_{n,m}}=\frac{nm}{5}$ when $n \equiv 0\bmod 5$ and $m \equiv 0\bmod 5$. Similarly, in the general case, one could also show that $\opt{B}{S}{T_{n,m}} = \frac{nm}{5} + O(m+n)$.
\end{rem}

\subsection{\IS{W}{P}}\label{sec:iswp}

In what follows, we consider the game \IS{}{} played with walls on a grid and where the robot has to pass over each square. Unlike blocks, there will be two types of walls: horizontal and vertical ones (see Figure \ref{fig:intro_mur}).

\begin{dfn}
Let $W$ be a set of walls. The row $R_i$, $2\leq i\leq n-1$ (resp. column $C_j$, $2\leq j\leq m-1$) is said to be covered by $W$ if there is at least one horizontal wall of $W$ adjacent to row $R_{i}$ (resp. one vertical wall adjacent to column $C_{i}$).
\end{dfn}

By similar considerations to Subsection~\ref{sec:grids}, we get the following observation:
\begin{obs}
\label{obs:rho}
If $W$ is a set of walls in $G_{n,m}$ which covers every column $C_j$, $2\leq j\leq m-1$, then $|W|\geq \rho(P_{m-2})$.
\end{obs}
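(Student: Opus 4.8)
The plan is to mirror the block argument of Subsection~\ref{sec:grids}, trading total domination for edge cover: a vertical wall will play the role of an \emph{edge} of a path, incident to the one or two interior columns it touches, so that a wall set covering all interior columns induces an edge cover of $P_{m-2}$. Since a horizontal wall is adjacent to no column, it never helps to cover a column; so first I would pass to the subset $W_v\subseteq W$ of vertical walls. If $W$ covers every column $C_j$, $2\le j\le m-1$, then so does $W_v$, and $|W|\ge|W_v|$, so it suffices to bound $|W_v|$ from below. Each wall of $W_v$ sits on a vertical edge between two consecutive columns $C_k$ and $C_{k+1}$; any wall lying on the outer boundary (left of $C_1$ or right of $C_m$) is adjacent to no interior column and may be ignored.

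Next I would normalize the two boundary cases. A wall on the edge $C_1C_2$ is adjacent only to the interior column $C_2$, and a wall on $C_{m-1}C_m$ only to $C_{m-1}$. Exactly as blocks were shifted from $C_1$ and $C_m$ into interior columns in the preamble of Observation~\ref{obs:gamma_tilde}, I would move each such wall one step inward: a wall on $C_1C_2$ is replaced by one on $C_2C_3$, and a wall on $C_{m-1}C_m$ by one on $C_{m-2}C_{m-1}$. Such a move keeps the wall adjacent to the interior column it was covering (adding at most one further covered column) and does not change $|W_v|$. After this, every remaining wall lies strictly between two interior columns $C_k,C_{k+1}$ with $2\le k\le m-2$, hence is adjacent to exactly the two consecutive interior columns $C_k$ and $C_{k+1}$.

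Finally I would read off the edge cover. Identifying the interior columns $C_2,\dots,C_{m-1}$ with the vertices of a path $P_{m-2}$, with the edge between two consecutive vertices corresponding to the vertical edge between the two columns, each normalized wall names exactly one edge of $P_{m-2}$. By hypothesis every interior column is adjacent to some wall, so every vertex of $P_{m-2}$ is an endpoint of a named edge; thus the set of distinct named edges is an edge cover of $P_{m-2}$. Consequently $|W_v|$ is at least the number of distinct named edges, which is at least $\rho(P_{m-2})=\lceil (m-2)/2\rceil$, and therefore $|W|\ge|W_v|\ge\rho(P_{m-2})$.

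The delicate part is entirely the boundary bookkeeping: I must verify that the inward shift of the two edge cases preserves coverage of all interior columns while leaving $|W_v|$ unchanged (and note that for tiny $m$, where there is no interior edge to shift onto, the bound is immediate). I should also record that several walls between the same pair of columns collapse to a single path edge, which is harmless since it only makes $|W_v|$ exceed the size of the induced edge cover, keeping the inequality in the right direction.
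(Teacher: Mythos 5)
Your proof is correct and follows essentially the same route as the paper: the paper justifies Observation~\ref{obs:rho} only by appeal to the ``similar considerations'' preceding Observation~\ref{obs:gamma_tilde}, namely shifting obstacles adjacent to the boundary columns one step inward and reading the normalized configuration off against $P_{m-2}$, which is exactly your normalization of walls on $C_1C_2$ and $C_{m-1}C_m$ followed by the walls-as-edges identification. Your explicit handling of horizontal walls, of multiplicities collapsing onto a single path edge, and of the small-$m$ degenerate case fills in details the paper leaves implicit, but introduces no new idea.
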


This result leads to the following lower bound for $\opt{W}{P}{G_{n,m}}$:
\begin{prop}
For each $n\geq m$, we have $\opt{W}{P}{G_{n,m}}\geq \rho(P_{m-2})$.
\end{prop}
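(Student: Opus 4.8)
The plan is to mirror the proof of Proposition~\ref{prop:grid}, replacing ``totally dominated'' by ``covered'' and $\gamma_t$ by $\rho$ throughout, while exploiting that walls sit on edges rather than on squares (so every square must be visited and there is no analogue of the block positions $I_j$ to exclude). I would fix an optimal wall set $W$ for \IS{W}{P} on $G_{n,m}$ and split into two cases according to whether every inner column is covered. In the easy case, every column $C_j$ with $2\le j\le m-1$ is covered by $W$, and Observation~\ref{obs:rho} immediately yields $|W|\ge \rho(P_{m-2})$.

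The substantive case is when some inner column $C_j$ is not covered. The key step is to argue that the robot can never stop at a square of $C_j$: stopping there via a horizontal move would require a vertical wall adjacent to $C_j$, impossible since $C_j$ is uncovered, and stopping there via a vertical move presupposes the robot is already moving inside $C_j$, which is circular because the only unconditioned entry point, the starting square $(1,1)$, lies in $C_1$. Consequently every square $(i,j)$ can only be passed over by a horizontal move along its own row $R_i$. I would then transport this to rows: for each inner row $R_i$, since $(i,j)$ must be passed over horizontally, the robot must actually travel horizontally through $R_i$; but to reach an inner row $R_i$ for the first time it must arrive by a vertical move and stop, which forces a horizontal wall adjacent to $R_i$, i.e. $R_i$ is covered. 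Hence every inner row $2\le i\le n-1$ is covered by $W$, and the row-transpose of Observation~\ref{obs:rho} gives $|W|\ge \rho(P_{n-2})\ge \rho(P_{m-2})$, the last inequality because $\rho(P_k)=\lceil k/2\rceil$ is nondecreasing and $n\ge m$.

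I expect the main obstacle to be making the two ``the robot can never enter'' arguments airtight, since walls, unlike blocks, come in two orientations: one must check that horizontal walls lying inside $C_j$ do not furnish the robot an alternative way to stop in $C_j$. The resolution is exactly the circularity observation above, namely that such a horizontal wall could only be used after the robot is already moving vertically in $C_j$, which never happens; moreover these same horizontal walls still count towards covering rows, so they cause no loss in the final bound. A pleasant consequence of walls sitting on edges is that, unlike the block proof, no relocation of obstacles (the auxiliary set $B^{*}$) is needed, because there are no occupied squares to exempt from being visited, so \emph{every} inner row is forced to be covered directly.
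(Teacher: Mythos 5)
Your proposal is correct and takes essentially the same route as the paper's own (deliberately terse) proof, which likewise splits on whether every column $C_j$, $2\le j\le m-1$, is covered, and in the uncovered case argues that the robot can never move vertically in $C_j$, hence must traverse every inner row horizontally, forcing each such row to be covered and giving $|W|\ge \rho(P_{n-2})\ge \rho(P_{m-2})$. The extra details you supply --- the non-circularity argument that the robot can never stop in $C_j$, that its first stop on an inner row must end a vertical move, and that no analogue of the block-relocation set $B^*$ is needed since walls occupy no squares --- are exactly the points the paper leaves implicit behind the phrase ``similar to the proof of Proposition~\ref{prop:grid}''.
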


\begin{proof} Similar to the one of Proposition~\ref{prop:grid}. If every column is covered by a wall, then the proposition holds from Observation~\ref{obs:rho}. Now consider an optimal solution $W$ such that there exists a column $C_j$, $2\leq j\leq m-1$ which is not covered by a wall. The robot cannot move vertically in $C_j$ since there is no vertical wall adjacent to this column. Hence it has to go horizontally through every row $R_i$, $2\leq i\leq n-1$, implying that each such row must be covered by a wall. Therefore we get the desired result:
\[
|W|\geq \rho(P_{n-2}) \geq \rho(P_{m-2}).
\]
\end{proof}

The following lemma is the key to reach the lower bound of the above proposition:
\begin{lem}\label{lem:WP}
Assume that the pattern $P_n$ of size $n\times 4$ depicted on Figure \ref{fig:patternWP} is in position $(1,j)$ ($(1,j)$ being the top-left square of the pattern). Then, if a robot is able to enter horizontally either in $(1,j)$ or $(n,j+3)$, it can pass vertically through all the columns $C_j$ to $C_{j+3}$. In addition, if a robot enters horizontally the pattern $P_n$ in position $(1,j)$, it can leave it horizontally on rows $R_1$ and $R_{n}$ in either direction.
\end{lem}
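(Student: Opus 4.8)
The plan is to mirror the proof of Lemma~\ref{lem:pattern_grid}: I would exhibit the concrete wall configuration of the $n\times 4$ pattern and then verify both assertions by explicitly tracing the robot's trajectories on Figure~\ref{fig:patternWP}. The guiding observation is that, for the game \IS{W}{P}, a single unobstructed vertical slide already makes the robot \emph{pass over} every square of a column, so the walls do not need to stop the robot inside a column; they only need to repeatedly reposition it at the top or bottom end of the next column to be swept. Accordingly, I expect the pattern to use vertical walls to halt a horizontal slide exactly at the intended column, and horizontal walls (together with the grid's bounding walls on rows $R_1$ and $R_n$) to turn the robot around between consecutive sweeps.

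First I would treat the entry point $(1,j)$. The robot slides down $C_j$ to $(n,j)$, sweeping $C_j$ in full; a vertical wall then stops a short horizontal move at the foot of $C_{j+1}$, from which it slides up and sweeps $C_{j+1}$; iterating this snake across the four columns sweeps $C_{j+2}$ and $C_{j+3}$ as well. The verification here is local: I must check that each horizontal step is stopped at the intended column and that no vertical slide overshoots the pattern. I would then invoke the point ($180^{\circ}$) symmetry of the pattern to obtain the entry from $(n,j+3)$ for free, the route being the rotated image of the first one; this is precisely why the statement offers these two symmetric entry points.

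For the second assertion I would read off, from the same figure and for a robot entering at $(1,j)$, the squares on rows $R_1$ and $R_n$ from which a horizontal slide leaves the pattern, checking that the relevant side of each such square carries no vertical wall so that the exit is possible in either direction. The main obstacle is not any single trajectory but the global consistency of the wall set: it must simultaneously sweep all four columns, permit both claimed exits on $R_1$ and $R_n$, and use no more walls than the edge-cover density $\rho$ allows, since Lemma~\ref{lem:WP} is meant to realize the lower bound $\rho(P_{m-2})$ once copies of the pattern are glued together. Reconciling a sweep that must both enter and leave on the extreme rows with such a tight wall budget is the part that genuinely requires the explicit construction, and the proof ultimately reduces to the finite check encoded in Figure~\ref{fig:patternWP}.
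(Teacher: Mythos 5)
Your meta-strategy (exhibit the wall set, trace routes, exploit the observation that a single vertical slide passes a whole column) is exactly the paper's, whose entire proof consists of the routes drawn on Figure~\ref{fig:patternWP}. But the concrete trajectory you propose is not the one encoded there, and it \emph{cannot} be realized within the wall budget. The actual width-$4$ pattern contains only two vertical walls: one to the right of $C_j$, adjacent to row $R_1$ only, and one to the right of $C_{j+2}$, adjacent to row $R_n$ only. Your serpentine route (down $C_j$, stop at the foot of $C_{j+1}$, up $C_{j+1}$, and so on through $C_{j+3}$) requires a stopping wall at every turn, i.e.\ roughly one wall per column; glued copies of such a pattern would use about $m$ walls, twice $\rho(P_{m-2})=\lceil (m-2)/2\rceil$, so it could never match the lower bound in Theorem~\ref{theo:WP} -- a tension you yourself flag but then defer to the unseen figure. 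With the two walls actually present, nothing stops the robot at the foot of $C_{j+1}$: from $(n,j)$ it slides east all the way to $(n,j+2)$. The idea you are missing is that each wall is used \emph{twice}, once from each side: the top wall stops the eastward entry at $(1,j)$ and also the westward slide along $R_1$ at $(1,j+1)$; the bottom wall stops the eastward slide along $R_n$ at $(n,j+2)$ and the westward entry at $(n,j+3)$. The route from $(1,j)$ is therefore $C_j$ down, $C_{j+2}$ up, $C_{j+1}$ down -- and it never sweeps $C_{j+3}$ at all.

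This also undercuts your use of the point symmetry. The pattern is indeed $180^{\circ}$-symmetric, but the consequence is not that either entry alone does the whole job: the entry at $(n,j+3)$ sweeps $C_{j+3}$, $C_{j+1}$, $C_{j+2}$ and misses $C_j$. The two entries are \emph{complementary}, each covering three of the four columns, and the lemma's ``either\dots or'' must be read through its use in Theorem~\ref{theo:WP}: entries on $R_1$ handle all but the last column of each copy, while each rightmost column is swept by the robot returning westward along $R_n$ (after bouncing in the empty column $C_m$), entering each copy at its $(n,j+3)$. Since your stated route contradicts the only wall configuration compatible with the optimal count, the proposal leaves the actual content of Lemma~\ref{lem:WP} -- the double use of each wall and the complementary two-entry structure -- unproved.
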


A proof of this result is given by Figure \ref{fig:patternWP}.

\begin{center}
\begin{figure}[h]
\resizebox{4cm}{!}{
\input{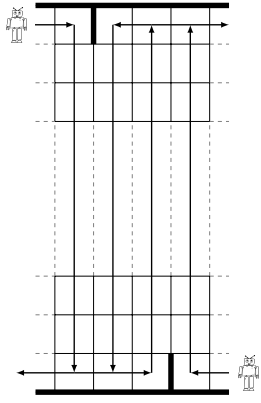}
}
\caption{Pattern of size $n\times 4$ for \IS{W}{P} on grid}
\label{fig:patternWP}
\end{figure}
\end{center}

\begin{theo}\label{theo:WP}
For any $n\geq m\geq 4$, we have $\opt{W}{P}{G_{n,m}}=\rho(P_{m-2})$.\\
For $m=3$ and $n\geq m$, we have $\opt{W}{P}{G_{n,m}}=1$.\\
For $m\leq 2$ and $n\geq m$, we have $\opt{W}{P}{G_{n,m}}=0$.\\
\end{theo}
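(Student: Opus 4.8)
The plan is to match the lower bound $\opt{W}{P}{G_{n,m}}\geq \rho(P_{m-2})$ already established above by exhibiting, for each $m$, a set of walls of size exactly $\rho(P_{m-2})=\lceil (m-2)/2\rceil$ that lets the robot pass over every square. The construction mirrors that of Theorem~\ref{theo:grid}, but uses the width-$4$ pattern $P_n$ of Lemma~\ref{lem:WP} (which needs only two walls) in place of the width-$8$ block pattern. Because each tile of the tiling is forced to carry a fixed number of walls, the optimality of the count will follow automatically, and the whole content lies in checking reachability.

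For the main range $n\geq m\geq 4$ I would leave column $C_1$ empty and tile the inner columns $C_2,\dots,C_{m-1}$ from left to right. Writing $m-2=4q+r$ with $0\le r\le 3$, place $q=\lfloor (m-2)/4\rfloor$ consecutive copies of $P_n$ starting at column $C_2$, and fill the remaining $r$ columns with a short residue pattern using $\rho(P_r)=\lceil r/2\rceil$ walls (analogues of the residue patterns of Figure~\ref{fig:residus}, to be supplied for $r\in\{1,2,3\}$). Since each copy of $P_n$ uses exactly two walls, the total number of walls is $2q+\lceil r/2\rceil=\lceil (m-2)/2\rceil=\rho(P_{m-2})$, matching the lower bound exactly.

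For reachability I would argue inductively along the tiling. The robot starts at $(1,1)$ and slides east along $R_1$; the vertical wall near the top-left of the first copy of $P_n$ stops it precisely at the entry square $(1,2)$. By Lemma~\ref{lem:WP} it then passes vertically through all four columns of that copy and may leave east on row $R_1$; the wall near the top-left of the next copy again halts it exactly at that copy's entry square, so the same lemma applies to each successive copy, and finally to the residue pattern. Column $C_1$ is passed by the initial slide from $(1,1)$ to $(n,1)$, and after the residue pattern the robot reaches the right boundary on $R_1$ and slides vertically along it to cover $C_m$. The delicate point — and the main obstacle — is exactly as in Theorem~\ref{theo:grid}: verifying that every hand-off between consecutive tiles deposits the robot on the correct entry square, and that each residue pattern both covers its $r$ columns and releases the robot on rows $R_1$ and $R_n$. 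This is a finite, figure-based case check over $r\in\{1,2,3\}$ rather than a genuine computation.

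Finally I would dispatch the small widths directly. For $m\leq 2$ every square lies on the boundary, and the robot traverses the entire boundary with no walls, giving $\opt{W}{P}{G_{n,m}}=0$. For $m=3$ the inner columns reduce to the single column $C_2$, so the construction above degenerates to one residue pattern of width $1$ using $\rho(P_1)=1$ wall; combined with the lower bound $\opt{W}{P}{G_{n,3}}\geq \rho(P_1)=1$ this yields the value $1$.
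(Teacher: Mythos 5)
Your overall route is the paper's: the identical lower bound, plus a left-to-right tiling of columns $C_2,\dots,C_{m-1}$ by the width-$4$ pattern of Lemma~\ref{lem:WP}. The difference is how you finish, and that is exactly where your write-up stops short. The residue patterns you promise for $r\in\{1,2,3\}$ are never constructed; their wall count $\lceil r/2\rceil$ and their hand-off properties (covering the $r$ columns, releasing the robot on $R_1$ and $R_n$) are asserted, not checked, and this is the only place where the upper bound requires content. The paper sidesteps the issue entirely: it glues $\lceil\frac{m-2}{4}\rceil$ copies of $P_n$ and simply \emph{truncates} the rightmost copy to width $r$. The top wall to the right of the truncated copy's first column survives (for $r=1$ it now abuts the empty column $C_m$), and for $r=3$ the bottom wall survives as well, so the total is $2\lfloor\frac{m-2}{4}\rfloor+\lceil r/2\rceil=\lceil\frac{m-2}{2}\rceil=\rho(P_{m-2})$ with no new figures needed. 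Your residues do exist --- they are precisely these truncated copies --- so the gap is fillable, but as written it is a hole, not a finished argument. A related caution: Figure~\ref{fig:patternWP} shows that entering at $(1,j)$ covers only columns $C_j$, $C_{j+1}$, $C_{j+2}$; the fourth column of each copy is covered on a \emph{return} pass along $R_n$, entering from the right after the robot bounces through the empty column $C_m$ (this is stated explicitly in the paper's proof). Your main-loop narrative, in which each copy is fully handled before the robot leaves east on $R_1$, leans on the loose ``either/or'' wording of Lemma~\ref{lem:WP} rather than on what the figure actually establishes.

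Second, your $m=3$ lower bound via $\rho(P_1)=1$ is unsound: $P_1$ is a single vertex with no incident edge, so it admits no edge cover, and the formula $\rho(P_n)=\lceil n/2\rceil$ does not apply to it (nor does the lower-bound proposition give anything usable here). The paper argues directly, and you should too: with no walls, the robot starting at $(1,1)$ can never stop at an interior row, since vertical slides in columns $C_1$ and $C_3$ run the full height and it can never stop inside column $C_2$; hence it can never initiate a horizontal move along a row $R_i$ with $2\leq i\leq n-1$, and never passes over $(i,2)$. One vertical wall adjacent to $(1,2)$ then suffices, giving $\opt{W}{P}{G_{n,3}}=1$. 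Your $m\leq 2$ case is fine.
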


\begin{proof}
Let $n\geq m\geq 4$. A solution is build by gluing $\lceil \frac{m-2}{4}\rceil$ copies of $P_n$ starting from column $C_2$ and ending in $C_{m-1}$ (leaving columns $C_1$ and $C_m$ empty). In this process, if $(m-2)\not\equiv 0\pmod 4$, then remove the $(m-2)\pmod 4$ last columns of the rightmost copy of $P_n$. Figure~\ref{fig:patternWP} ensures that all the columns (except the last ones) of all the copies are visited by entering from row $R_1$. It also shows that all the rightmost columns of each pattern can be visited by entering from the right on row $R_n$. Note that it is also true for the rightmost copy of $P_n$ since the robot can move vertically in column $C_m$.\\
In the case where $m=3$, it is straightforward to see that from position $(1,1)$, the robot cannot visit all the squares if there is no wall. A vertical wall adjacent to position $(1,2)$ is however sufficient.
\end{proof}

\begin{rem}
Note that Theorem~\ref{theo:WP} remains valid for any starting position. It is not hard to see that the pattern of Figure~\ref{fig:patternWP} ensures the robot to reach $(1,1)$ from any position.
\end{rem}

\subsection{\IS{W}{S}}\label{sec:isws}

In what follows, we consider the game \IS{}{} played with walls on a grid where the robot has to stop on each square. Constructions and proofs are similar to the ones of Subsection~\ref{sec:isbs}. We obtain the following lower bound for $\opt{W}{S}{G_{n,m}}$:

\begin{prop}
For each $n\geq m \geq 3$, we have $\opt{W}{S}{G_{n,m}}\geq \rho(G_{n-2,m-2}).$
\end{prop}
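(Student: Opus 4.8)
The plan is to mirror the argument used for the \IS{B}{S} lower bound in Subsection~\ref{sec:isbs}, simply replacing domination by edge covering and blocks by walls. Fix an optimal solution $W$ of \IS{W}{S} on $G_{n,m}$, and let $G'$ denote the inner grid induced by the squares $(i,j)$ with $2\le i\le n-1$ and $2\le j\le m-1$, which is isomorphic to $G_{n-2,m-2}$. The starting observation is that every inner square $(i,j)$ must carry a wall on at least one of its four incident grid edges: since $(i,j)$ does not lie on the boundary of $G_{n,m}$, the robot can only be stopped on it by a wall, so one of the four edges bounding $(i,j)$ must be walled.

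Next I would classify the relevant walls. Let $W_1\subseteq W$ be the set of walls lying on an edge joining two inner squares; identifying each such wall with the corresponding grid edge, $W_1$ is precisely a subset of the edges of $G'$. A vertex of $G'$ is incident to a wall of $W_1$ exactly when the associated inner square has a walled edge shared with another inner square. Let $V_b$ be the set of inner squares not incident to any wall of $W_1$. By the starting observation each $v\in V_b$ still carries a wall, and that wall cannot join $v$ to another inner square (otherwise $v$ would be incident to $W_1$); hence it joins $v$ to a border square of $G_{n,m}$, which forces $v$ to lie on the border of $G'$. Crucially, such a wall is adjacent to exactly one inner square, so the walls charged to the vertices of $V_b$ are pairwise distinct and distinct from the walls of $W_1$. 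This yields the inequality $|W|\ge |W_1|+|V_b|$.

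Finally I would assemble an edge cover of $G'$. Since every vertex of $G'$ has a neighbour in $G'$ (the grid being non-trivial), for each $v\in V_b$ I choose an edge $e_v$ of $G'$ incident to $v$, and set $W'=W_1\cup\{e_v:v\in V_b\}$. The edges of $W_1$ cover every vertex of $G'$ outside $V_b$, while the edges $e_v$ cover the vertices of $V_b$, so $W'$ is an edge cover of $G'$ with $|W'|\le |W_1|+|V_b|$. Chaining the inequalities gives
\[
\opt{W}{S}{G_{n,m}}=|W|\ge |W_1|+|V_b|\ge |W'|\ge \rho(G_{n-2,m-2}),
\]
as claimed. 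As in the block case, the main obstacle is the bookkeeping that makes the two inequalities compatible: one must check that the walls charged to $V_b$ are genuinely distinct from one another and from $W_1$, which rests entirely on the fact that a wall between an inner square and a border square touches a single inner square.
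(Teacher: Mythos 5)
Your proof is correct and follows essentially the same route as the paper's: the same decomposition into walls $W_1$ between two inner squares and the set $V_b$ of uncovered inner squares, the same observation that each square of $V_b$ owes its wall to a boundary-touching wall adjacent to a single inner square, and the same substitution producing an edge cover of $G'\cong G_{n-2,m-2}$. If anything, you are slightly more explicit than the paper on the one point it glosses over, namely that the walls charged to $V_b$ are pairwise distinct and disjoint from $W_1$ (so $|W|\ge |W_1|+|V_b|$) and that each must be replaced by a genuine edge $e_v$ of $G'$ before invoking $\rho$.
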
 

\begin{proof}
Consider an optimal solution $W$ of \IS{W}{S} on $G_{n,m}$.
Let $(i,j)$ be an inner square of the grid, i.e $2\leq i \leq n-1$ and  $2\leq j\leq m-1$.
There must a wall adjacent to $(i,j)$ otherwise the robot cannot stop on it. 
Let $G'$ be the grid induced by the inner squares. It is isomorphic to $G_{n-2,m-2}$.
Let $W_1$ be the set of walls that are adjacent to two inner squares of $G_{n,m}$. 
Let $V_b$ be the set of squares of $G'$ that are not adjacent to a wall of $W_1$. Note that all these squares are on the border of $G'$ and for each square of $V_b$, there is a wall between this square and the adjacent square on the border of $G_{n,m}$. Hence one can add to $W_1$ one wall adjacent to each square of $V_b$ and this new set $W'$ will have size at most $|W|$. The set $W'$  naturally corresponds to a set of edges of $G'$ that cover the vertices of $G'$. Hence we have:
$$\opt{W}{S}{G_{n,m}}=|W|\geq |W'|\geq \rho(G_{n-2,m-2}).$$
\end{proof}

We now give a construction of a solution of \IS{W}{S} on $G_{n,m}$.

\begin{prop}\label{prop:cons-WS}
For any $n\geq m \geq 3$, we have  $$\opt{W}{S}{G_{n,m}}\leq \rho(G_{n-2,m-2})+\frac{3(m+n)}{4} +3.$$ 
\end{prop}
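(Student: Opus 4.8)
The plan is to mimic the three-step construction used for \IS{B}{S} in the proof of Proposition~\ref{prop:cons-BS}, replacing the \emph{dominating} pattern (organized around $\gamma$) by an \emph{edge-covering} pattern (organized around $\rho$). The guiding observation is that, for the robot to be able to stop on an inner square $(i,j)$, it suffices that one of the four sides of $(i,j)$ carries a wall; and a wall placed between two inner squares lets the robot stop on \emph{both} of them (one stop for each sliding direction orthogonal to the wall). This is exactly the statement that such a wall is an edge of $G'\cong G_{n-2,m-2}$ covering its two endpoints, so the set of walls serving to stop the robot will be (up to border corrections) an edge cover of $G'$, which is where the term $\rho(G_{n-2,m-2})$ comes from.

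Concretely, I would first build a set $W_1$ from a periodic wall pattern filling the inner grid, playing the role of the skew lattice $B_1$ of Proposition~\ref{prop:cons-BS} (gray rectangle of Figure~\ref{fig:consBS}): a mix of horizontal and vertical walls chosen so that (i) every inner square is adjacent to a wall of $W_1$, and (ii) the walls of $W_1$ lying strictly between two inner squares form a near-perfect matching, hence an edge cover, of $G'$. Property (ii) gives $|W_1|=\rho(G_{n-2,m-2})+O(m+n)$, the linear error accounting for the pattern walls that land against the border of $G_{n,m}$ and for the parity leftover of an unpaired inner row or column. Then, exactly as with $B_2$ and $B_3$, I add a set $W_2$ of walls letting the robot enter each local window from the border and stop on the border squares, and a set $W_3$ of at most a constant number of walls fixing the corners. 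Each wall of $W_2$ can be charged to a border row or column, so $|W_2|$ is linear in $m+n$ and $|W_3|=O(1)$. Summing the three contributions and tracking the coefficients, which combine to $\tfrac34(m+n)+3$, yields the claimed bound.

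It remains to check reachability, and this I expect to be the main obstacle. As in the block case (compare the local routing of Figure~\ref{fig:rectBS}), the crux is a local maneuvering gadget: once the robot has entered a small window whose boundary carries walls of $W_1\cup W_2$, the wall adjacent to each inner square of the window lets it stop there and then slide out in a direction that feeds a neighboring window, while the entry walls of $W_2$ let it step in from the border and stop on border squares and $W_3$ settles the nontrivial corners. Verifying that these local gadgets chain together to reach \emph{every} square — a routing case analysis that is most delicate along the border and in the corners — and pinning down the precise coefficient $\tfrac34$ are the two points needing care; the asymptotic leading term is immediate from the edge-cover count, and since only an upper bound is claimed, it suffices to exhibit one consistent wall pattern together with one valid route to each square.
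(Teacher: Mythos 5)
Your proposal follows essentially the same route as the paper's proof: a periodic set $W_1$ of inner walls whose part lying strictly inside forms a near-perfect matching (hence an edge cover) of $G'\cong G_{n-2,m-2}$, a linear-size set $W_2$ of border walls letting the robot enter the local windows and stop on border squares, a constant number of corner-fixing walls, and a local stop-and-exit gadget (the paper's $3\times 2$ rectangles of Figure~\ref{fig:rectWS}) chaining the windows together. The only piece the paper supplies that your sketch leaves open is the explicit pattern --- the translates of the walls $(2,2)-(2,3)$ and $(3,2)-(4,2)$ by integer combinations of $(1,1)$, $(0,4)$ and $(4,0)$ --- together with the bookkeeping giving $|W_1|\leq \rho(G_{n-2,m-2})+\frac{m+n}{4}+1$ and $|W_2|\leq \frac{m+n}{2}+2$, which are exactly the verifications you flag as remaining.
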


\begin{proof}
Let $n\geq m\geq 3$.
We construct a solution $W$ of \IS{W}{S} in three steps. The construction is illustrated on Figures \ref{fig:consWS97} and \ref{fig:rectWS}.

We denote by $(i,j)-(i,j+1)$ the vertical wall between the square $(i,j)$ and $(i,j+1)$ and use similar notations for horizontal walls. An {\em inner wall} is a wall that is not touching the border (if it is a vertical wall, it means that $2\leq j \leq m-2$ and $i$ is not constrained in the previous notation).

Let first $W_1$ be the set of inner walls that are the translation of the walls $(2,2)-(2,3)$ and $(3,2)-(4,2)$ by linear combinations of the three vectors $\{(1,1), (0,4), (4,0)\}$ (see the inner walls of Figure \ref{fig:consWS97}). Note that $W_1$ covers all the inner squares of $G_{n,m}$ and that each square is covered exactly once. Some walls of $W_1$ are located between an inner square and a border square. There are at most $2\left(\left\lceil \frac{n-2}{4} \right \rceil+\left\lceil \frac{m-2}{4} \right \rceil\right)\leq \frac{m+n}{2}+2$ such walls. Hence $|W_1|\leq \rho(G_{n-2,m-2})+\frac{m+n}{4}+1$.

The walls of $W_1$ will be enough to ensure that the robot will stop on almost each inner square as soon as it will enter to some of them. Indeed, if the robot enters in a rectangle of size $3\times 2$ that has four walls on its border (see as an example the gray rectangle on Figure \ref{fig:consWS97}), then it will be able to stop in the four corners of this rectangle and thus to go out of the rectangle from these four places (see Figure \ref{fig:rectWS}). To ensure that the robot will enter in the rectangles of the border of the grid and stop on the border, we add some walls that will form the set $W_2$. For each wall of $W_1$ located between an inner square and a border square, we add a wall in $W_2$ in the following way:

\begin{itemize}
\item for each $(1,j)-(2,j)\in W_1$, we add the wall $(1,j)-(1,j+1)$ to $W_2$;
\item for each $(n-1,j)-(n,j)\in W_1$, we add the wall $(n,j-1)-(n,j)$ to $W_2$;
\item for each $(i,1)-(i,2)\in W_1$, we add the wall $(i,1)-(i+1,1)$ to $W_2$;
\item for each $(i,m-1)-(i,m)\in W_1$, we add the wall $(i-1,m)-(i,m)$ to $W_2$.
\end{itemize}

We furthermore add the wall $(2,1)-(3,1)$ to $W_2$. Finally, if $(n-2,m-1)-(n-1,m-1)$ (respectively $(n-1,m-2)-(n-1,m-1)$) is a wall, we add the wall $(n,m-2)-(n,m-1)$ (resp. $(n-2,m)-(n-1,m)$) to $W_2$. 
Note that the first case holds if and only if $n-m\equiv 2\bmod 4$ and whereas the second case holds if and only if $n-m\equiv 3\bmod 4$.

The final set of walls $W_1\cup W_2$ is a solution of \IS{W}{S} of size at most $$\rho(G_{n-2,m-2})+\frac{3(m+n)}{4}+3.$$
\end{proof}

\begin{center}
\begin{figure}[h]
\resizebox{4cm}{!}{
\input{WS-cons97}
}
\caption{Solution of \IS{W}{S} on $G_{9,7}$}
\label{fig:consWS97}
\end{figure}
\end{center}

\begin{center}
\begin{figure}[h]
\resizebox{3cm}{!}{
\input{WS-rectangle}
}
\caption{Pattern of the solution of \IS{W}{S}}
\label{fig:rectWS}
\end{figure}
\end{center}

By combining the lower and uppers bounds and noticing that $\rho(G_{n,m})=\left\lceil \frac{nm}{2}\right \rceil$, we obtain an asymptotic value for $\opt{W}{S}{G_{n,m}}$:

\begin{cor}
If $m$ and $n$ are large enough, $\opt{W}{S}{G_{n,m}}=\frac{mn}{2}+O(m+n)$.
\end{cor}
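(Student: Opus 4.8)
The plan is to sandwich $\opt{W}{S}{G_{n,m}}$ between the two bounds already established in this subsection and then simplify using the known value of $\rho$. The lower bound gives $\opt{W}{S}{G_{n,m}}\geq \rho(G_{n-2,m-2})$, while Proposition~\ref{prop:cons-WS} gives $\opt{W}{S}{G_{n,m}}\leq \rho(G_{n-2,m-2})+\frac{3(m+n)}{4}+3$. Putting these together yields
$$\rho(G_{n-2,m-2}) \leq \opt{W}{S}{G_{n,m}} \leq \rho(G_{n-2,m-2})+\frac{3(m+n)}{4}+3,$$
so that $\opt{W}{S}{G_{n,m}}=\rho(G_{n-2,m-2})+O(m+n)$, with a completely explicit constant hidden in the error term.

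Next I would substitute the stated value $\rho(G_{n-2,m-2})=\left\lceil \frac{(n-2)(m-2)}{2}\right\rceil$ and expand the product. Since $(n-2)(m-2)=nm-2(m+n)+4$, we have $\frac{(n-2)(m-2)}{2}=\frac{nm}{2}-(m+n)+2$, and the ceiling alters this by at most $1$. Hence $\rho(G_{n-2,m-2})=\frac{nm}{2}+O(m+n)$, the leading term $\frac{nm}{2}$ being the only quadratic contribution and everything else being linear in $m+n$.

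Combining the two displays, both the lower and the upper bound on $\opt{W}{S}{G_{n,m}}$ equal $\frac{nm}{2}+O(m+n)$, and the corollary follows immediately. There is no genuine obstacle here: the entire argument is the routine bookkeeping of collecting the linear error terms, the point being that the gap between the lower and upper bounds (namely $\frac{3(m+n)}{4}+3$) is itself $O(m+n)$ and therefore absorbed into the stated error. The hypothesis ``large enough'' serves only to place us in the asymptotic regime where the earlier propositions ($n\geq m\geq 3$) apply and the $O(m+n)$ notation is meaningful.
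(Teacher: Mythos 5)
Your proposal is correct and follows exactly the paper's own route: the corollary is obtained by sandwiching $\opt{W}{S}{G_{n,m}}$ between the lower bound $\rho(G_{n-2,m-2})$ and the upper bound of Proposition~\ref{prop:cons-WS}, then substituting $\rho(G_{n,m})=\left\lceil \frac{nm}{2}\right\rceil$ and absorbing all linear terms into $O(m+n)$. Your write-up merely makes explicit the expansion $(n-2)(m-2)=nm-2(m+n)+4$ that the paper leaves implicit, which is a harmless elaboration of the same argument.
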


As for Subsection~\ref{sec:isbs}, we did not try to find the exact value of $\opt{W}{S}{G_{n,m}}$.

\begin{rem}
The solution built in Proposition \ref{prop:cons-WS} remains valid for any starting position except the part of the border around the bottom-right corner when $n-m\equiv 0 \bmod 4$ or $n-m\equiv 1 \bmod 4$.
\end{rem}

\end{document}